\numberwithin{equation}{section}
\newtheorem{thm}{Theorem}[section]
\newtheorem{defn}[thm]{Definition}
\newtheorem{lem}[thm]{Lemma}
\newtheorem{cor}[thm]{Corollary}
\newtheorem{prop}[thm]{Proposition}
\newtheorem{ex}[thm]{Example}
\newcommand\A{\mathbb A}
\newcommand\C{\mathbb C}
\newcommand\E{\mathbb E}
\newcommand\F{\mathbb F}
\newcommand\PP{\mathbb P}
\newcommand\R{\mathbb R}
\newcommand\Q{\mathbb Q}
\newcommand\Z{\mathbb Z}
\newcommand\cC{\mathcal C}
\newcommand\cG{\mathcal G}
\newcommand\cO{\mathcal O}
\newcommand\cX{\mathcal X}
\newcommand\fD{\mathfrak D}
\newcommand\vdim{\operatorname{vdim}}
\newcommand\ev{\operatorname{ev}}
\newcommand\pr{\operatorname{pr}}
\newcommand\Nmax{\operatorname{Nmax}}
\newcommand\virt{\mathrm{virt}}
\newcommand\rel{\mathrm{rel}}
\newcommand\floor{\mathrm{floor}}
\newcommand\loga{\mathrm{log}}
\newcommand\Aut{\mathrm{Aut}}
\newcommand\cM{\mathcal{M}}
\title{Refined floor diagrams from higher genera and lambda classes}
\author{Pierrick Bousseau}
\date{}
\begin{document}

\maketitle

\begin{abstract}
We show that, after the change of variables $q=e^{iu}$, refined floor diagrams for 
$\PP^2$ and Hirzebruch surfaces compute generating series of higher genus relative Gromov-Witten 
invariants with insertion of a lambda class.
The proof uses an inductive application of the degeneration formula in relative Gromov-Witten theory and an explicit result in relative Gromov-Witten theory of $\PP^1$.

Combining this result with the similar looking refined tropical
correspondence theorem for log Gromov-Witten invariants, 
we obtain a non-trivial relation between relative and 
log Gromov-Witten invariants for 
$\PP^2$ and Hirzebruch surfaces.
We also prove that the Block-G\"ottsche invariants of $\F_0$ and $\F_2$ are related by the Abramovich-Bertram formula.

{\bf Mathematics Subject Classification (2010)}. 14N10, 14N35.

{\bf Keywords.} Gromov-Witten theory, floor diagrams, tropical geometry.

\end{abstract}

\setcounter{tocdepth}{1}

\tableofcontents

\thispagestyle{empty}

\section{Introduction}

\subsection{Overview}
Floor diagrams, introduced by Brugall\'e and Mikhalkin
\cite{MR2359091} \cite{MR2500574}, are combinatorial objects that are used to provide a solution to enumerative problems concerning real and complex curves in $h$-transverse toric surfaces. Particular examples of $h$-transverse toric surfaces are the projective plane $\PP^2$ and Hirzebruch surfaces.

One way to understand the relation between 
floor diagrams and curve counting is based on tropical geometry. Mikhalkin's correspondence theorem \cite{MR2137980} relates tropical curves in $\R^2$ and curve counting for arbitrary projective 
    toric surfaces. For $h$-transverse toric surfaces, one can consider a particular choice of tropical incidence
conditions, known as ``vertically stretched", for which the combinatorics of the tropical curves can be encoded by floor diagrams. This is the approach followed in \cite{MR2359091, MR2500574}.
An alternative and more direct way to 
understand the relation between floor diagrams and curve counting 
relies on relative Gromov-Witten theory. 
Indeed, relative Gromov--Witten theory \cite{MR1882667}
allows the definition of counts of curves in $\PP^2$ and Hirzebruch surfaces with tangency conditions along smooth divisors, and then 
floor diagrams naturally appear
\cite{MR3345189, MR3658147}
as describing the combinatorics of successive applications of the degeneration formula in relative Gromov-Witten theory \cite{MR1938113}. 

In this paper, we investigate the connection between counts of complex curves and the $q$-refined counts of floor diagrams introduced by
Block and G\"ottsche \cite{MR3579972}. 
The $q$-refined counts of floor diagrams 
are Laurent  
polynomials in a variable $q$, and they reduce to the ordinary integral counts of floor diagrams for $q=1$.
In \cite{MR3904449}, we established a $q$-refined version of Mikhakin's correspondence theorem, relating $q$-refined counts of tropical curves in 
$\R^2$ \cite{MR3453390} and generating series of higher genus log Gromov-Witten invariants of toric surfaces with insertion of a lambda class after the change of variables $q=e^{iu}$. On the other hand,
for $h$-transverse toric surfaces, the ``vertically stretched" limit connects $q$-refined counts of tropical curves and $q$-refined counts of floor diagrams, as in the unrefined case.
Therefore, \cite{MR3904449} can be used to give an understanding 
based on $q$-refined tropical geometry of the relation between $q$-refined floor diagrams and curve counting.
The goal of this paper is to present an alternative and more direct  understanding of the relation between $q$-refined
floor diagrams and curve counting based on relative Gromov-Witten theory. We show the following result (we refer to Theorem \ref{main_thm_precise} for the precise statement).

\begin{thm} \label{main_thm0}
For $\PP^2$ and Hirzebruch surfaces, $q$-refined counts of floor diagrams are,
after the change of variables $q=e^{iu}$, generating series of higher genus relative Gromov-Witten invariants with insertion of a lambda class.
\end{thm}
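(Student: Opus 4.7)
The plan is to prove the theorem by induction on the number of floors, using an iterated application of the degeneration formula in relative Gromov-Witten theory to reduce the calculation to an explicit computation in the relative Gromov-Witten theory of $\PP^1$. Since $S$ (either $\PP^2$ or a Hirzebruch surface) is $h$-transverse, its Newton polygon decomposes into horizontal strips of unit height, and each such strip corresponds to a toric degeneration peeling one floor off $S$. Geometrically, this yields a one-parameter degeneration of $S$ into $S' \cup_D T$, where $S'$ is an $h$-transverse toric surface with one fewer floor, $T$ is a strip which is a $\PP^1$-bundle, and $D$ is the common horizontal divisor.

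For each such step I would apply Li's degeneration formula to express the higher genus relative Gromov-Witten invariant of $S$ with an insertion of $\lambda_g$ as a sum, over splittings of the curve class and over gluing data along $D$, of products of relative Gromov-Witten invariants of $S'$ and of $T$. The lambda class is compatible with this decomposition: under the normalization of the nodal stable maps on the special fiber, the Hodge bundle splits as a direct sum, so $\lambda_g$ on the total space distributes as a product of lambda classes on the two pieces. Iterating the peel-off reduces the invariant of $S$ to a sum indexed by combinatorial data (curve classes, contact orders, gluing multiplicities at each divisor) of products of local contributions, one per floor, each of which is a relative Gromov-Witten invariant of $\PP^1$ with prescribed ramification at $0$ and $\infty$ and a lambda class insertion.

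The technical heart of the argument is an explicit evaluation of these local $\PP^1$ contributions: the generating series in $u$ for the relative Gromov-Witten invariants of $\PP^1$ with a lambda class insertion should, after the substitution $q = e^{iu}$, equal the Laurent polynomial in $q$ assigned by Block-G\"ottsche to a single elementary floor, up to the prescribed edge weights. With this local formula in hand, one matches the combinatorial sum produced by iterating the degeneration formula with the sum defining the refined floor diagram count of $S$: each admissible floor diagram corresponds to one term in the iterated sum, the vertex factors come from the $\PP^1$ invariants, and the edge factors come from the gluing multiplicities of the degeneration formula. The main obstacle will be the $\PP^1$ computation itself, together with a careful bookkeeping of gluing multiplicities and automorphism factors so that the combinatorial weights assemble exactly into the Block-G\"ottsche refined multiplicity without spurious factors.
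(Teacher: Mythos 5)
Your overall skeleton (degenerate into an accordion of horizontal strips, apply Li's degeneration formula, use the triviality of the quotient in the Hodge-bundle exact sequence to distribute the lambda class, and match the resulting combinatorics with marked floor diagrams and their edge weights) does agree with the first half of the paper's argument. But there is a genuine gap at what you yourself call the technical heart. The per-floor local contribution is \emph{not} a relative Gromov--Witten invariant of $\PP^1$ with prescribed ramification at $0$ and $\infty$: each piece of the accordion is a full Hirzebruch surface, and the distinguished floor contribution is the surface-level invariant $N_{g,\rel}^{\mu\nu}$ of $\F_k$ in a class of fiber degree one, with one interior point insertion, tangency conditions $\mu$ and $\nu$ along the two horizontal divisors, and an insertion of $(-1)^g\lambda_g$. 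There is no known formula for these invariants to quote; their evaluation (the statement that the generating series equals $\prod_{\ell}\bigl(\tfrac{1}{\ell}2\sin(\tfrac{\ell u}{2})\bigr)^{\mu_\ell+\nu_\ell}$, i.e.\ the Block--G\"ottsche vertex factor after $q=e^{iu}$) is precisely the main new content of the paper, proved by a separate induction on the largest part of $(\mu,\nu)$: one trades tangency conditions for exceptional divisors on blow-ups $\F_k^{\rho\sigma}$ of points on $D_{\pm k}$, degenerates to the normal cone of $D_{-k}$ after blowing up a section, and the cancellations needed for the induction step come from the power-series identity $\exp(\log(1+x))=1+x$. The relative theory of $\PP^1$ (Bryan--Pandharipande's $\cO_{\PP^1}(-1)$ multiple-cover formula, giving the factors $\tfrac{(-1)^{\ell-1}}{\ell}\tfrac{1}{2\sin(\ell u/2)}$, and the fiber-class computations with $\lambda_g$ killed by Mumford's relation) enters only as an ingredient of that induction, not as the local contribution itself. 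So asserting that the floor factor ``should'' equal the Block--G\"ottsche polynomial by an explicit $\PP^1$ computation assumes the key theorem rather than proving it.

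Two further points you would need to supply even for the reduction step. First, one must distribute exactly one of the $n$ point insertions to each accordion component and then classify, by virtual dimension counts combined with the vanishing $\lambda_g^2=0$, which terms of the degeneration formula survive; this is what forces all non-distinguished vertices to be fiber-class multiple covers contributing $1/w_E$ and makes the edge weights assemble into $w_E^2\cdot(\tfrac{1}{w_E}[w_E]_q)^2=[w_E]_q^2$. Second, in the inductive evaluation of the vertex one applies the degeneration formula to a geometry that is already relative to $D_{-k}\cup D_k$, and this requires the geometric input that contributing relative stable maps have no horizontal components in the bubbles; without such a statement the iterated use of the degeneration formula in the relative setting is not justified.
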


Relative Gromov-Witten invariants of a surface 
$S$ with insertion of a lambda class can be naturally viewed as 
equivariant relative Gromov-Witten invariants of the 
$3$-fold $S \times \A^1$ (see \S \ref{section_dim_3}). 
Remarkably, the conjectural correspondence between Gromov-Witten invariants and 
Pandharipande-Thomas stable pair invariants for $3$-folds 
\cite{mnop1, mnop2, pt} is also formulated in terms of a change of variables $q=e^{iu}$. 
As this correspondence is known for the equivariant relative theories of toric $3$-folds \cite{moop, mpt}, we can rephrase Theorem \ref{main_thm0} as follows (see Theorem \ref{thm_main_pt} for the precise statement).

\begin{thm} \label{thm_dt_intro}
For $S=\PP^2$ or a Hirzebruch surface, $q$-refined counts of 
floor diagrams are equivariant relative Pandharipande-Thomas stable pair invariants of the $3$-fold $S \times \A^1$. 
\end{thm}

Tropical computations of higher genus Gromov-Witten invariants and of some stable pair invariants of toric 3-folds were done previously by Brett Parker in the framework of exploded manifolds \cite{parker2017three}. The main result of our paper can be viewed as an example of the tropical/Gromov-Witten correspondence of \cite{parker2017three} for which the tropical side can be explicitly described in terms of floor diagrams, and for which the stable pair reformulation can be easily stated.

\subsection{Structure of the proof of Theorem \ref{main_thm0}}
As in the unrefined case, the combinatorics of the floor diagrams
captures successive applications of the degeneration formula in Gromov-Witten theory. The non-trivial step to 
prove Theorem \ref{main_thm0}
is to evaluate the 
contribution to the curve counts of the various vertices of each floor diagram.
In the unrefined case, these contributions are all trivially equal to $1$. 
In the $q$-refined case, we need to compute explicitly a family of relative Gromov-Witten
invariants with lambda class insertion for Hirzebruch surfaces. The computation of these
invariants in \S \ref{section_computation} is the main new technical content of this paper and is done by an induction whose each step
requires the application of the degeneration formula for relative Gromov-Witten invariants and the
explicit knowledge of relative Gromov-Witten invariants of $\PP^1$. Perhaps curiously, 
the cancellation of terms necessary for the induction step is the power series version of the identity
\begin{equation}\exp (\log(1+x)) = 1+x\,.
\end{equation}

\subsection{Refined Fock spaces} Cooper and Pandharipande 
\cite{MR3653237} remarked that the combinatorics of the degeneration formula
in relative Gromov-Witten theory for $\PP^1 
\times \PP^1$ and $\PP^2$ can be nicely encoded into an operator formalism in Fock space. This approach has been recently generalized to Hirzebruch surfaces by 
Cooper \cite{cooper2017fock}.
Block and G\"ottsche \cite{MR3579972} generalized this remark to $h$-transverse toric surfaces
by recognizing that the floor diagrams were the Feynman diagrams of the operator formalism 
in Fock space. They also remarked that the $q$-refined floor diagrams can still be interpreted 
as Feynman diagrams of a $q$-deformed operator formalism in Fock space. It follows that 
Theorem \ref{main_thm0} can be equivalently phrased in terms of the $q$-deformed operator formalism in 
Fock space: this operator formalism computes, after the change of variables $q=e^{iu}$, generating series of higher genus relative Gromov-Witten invariants with insertion of a lambda class. We refer to Corollary 3.7 of \cite{MR3579972}
for the explicit formulas in terms of
$q$-deformed operator formalism in Fock space for $\PP^2$ and Hirzebruch surfaces.

\subsection{Log invariants} This paper is logically independent of \cite{MR3904449}: it is phrased entirely into the framework of relative Gromov-Witten theory
along smooth divisors \cite{MR1882667}, 
and does not require the log technology used in \cite{MR3904449}.
In particular, we hope that the present paper could be viewed as a more accessible introduction
to the set of ideas presented in \cite{MR3904449}. 

The combination of Theorem \ref{main_thm0} with the main result of \cite{MR3904449}
produces an interesting result. As both relative and log Gromov-Witten invariants of $\PP^2$ and Hirzebruch surfaces are computed by the same $q$-refined floor diagrams, we obtain
a non-trivial relation between them. We give a precise statement in 
Theorem \ref{thm_log}. This relation could probably be obtained directly using a
degeneration argument in a log context, but it is interesting that tropical geometry gives an alternative argument. 
In the unrefined case, similar remarks are made
in \cite{cavalieri2017counting} and \cite{cooper2017fock}.

\subsection{The $q$-refined Abramovich-Bertram formula} A classical formula, due to Abramovich-Bertram \cite{MR1837110} 
in genus zero and to Vakil in higher genus \cite{MR1771228}, relates the enumerative
geometries of the Hirzebruch surfaces
$\F_0$ and $\F_2$.
Motivated by the fact that the same formula holds for Welschinger counts of real curves, Brugall\'e
\cite{brugalle2018invariance} has recently conjectured that the same formula holds at the level of the corresponding $q$-refined Block-Göttsche tropical invariants.
We give a proof of this conjecture in \S 
\ref{section_brugalle_conj}
(see Corollary \ref{cor_AB} for the precise statement). 

\begin{thm} \label{thm_f0_f2_intro}
The $q$-refined counts of floor diagrams for 
$\F_0$ and $\F_2$ are related by a $q$-refinement of the Abramovich-Bertram formula.
\end{thm}

Whereas the statement of Theorem \ref{thm_f0_f2_intro} is an identity between $q$-refined combinatorial counts,
and so possibly accessible by a purely combinatorial argument, our proof is geometric: using Theorem \ref{main_thm0}, we rephrase 
Theorem \ref{thm_f0_f2_intro} as a relation between relative Gromov-Witten invariants, and the result then follows from the degeneration formula in Gromov-Witten theory.

\subsection{Plan of the paper}
In \S
\ref{section_floor_diagrams}, we review the definitions of 
$h$-transverse toric surfaces, floor diagrams, and $q$-refined counts.
In \S \ref{section_relative_gw_theory}, we fix our notations for relative Gromov-Witten invariants with insertion of a lambda class.
The technical heart of the paper is \S \ref{section_computation}, in which we evaluate explicitly a family of relative Gromov-Witten invariants of Hirzebruch surfaces with insertion of a lambda class. 
In \S \ref{section_main_result}, we combine the calculations of 
\S \ref{section_computation} with a degeneration argument to establish our main result, Theorem \ref{main_thm_precise}, relating $q$-refined counts of floor diagrams and higher genus Gromov-Witten invariants.
In \S \ref{section_dim_3}, we explain how to rephrase this result 
from a $3$-dimensional point of view in terms of stable pair invariants.
In \S \ref{section_log}, we combine Theorem \ref{main_thm_precise}
with the main result of \cite{MR3904449} to get Theorem \ref{thm_log}, a comparison result
between relative and log Gromov-Witten invariants. Finally, in \S
\ref{section_brugalle_conj}, we give,
as application of Theorem
\ref{main_thm_precise}, the proof of Theorem \ref{thm_f0_f2_intro}, that is, that
Block-Göttsche invariants of $\F_0$ and $\F_2$ are related by the Abramovich-Bertram formula, as conjectured in \cite{brugalle2018invariance}.

\subsection{Acknowledgements.}

The question to give an analogue of \cite{MR3904449} in the context of 
floor diagrams was asked
by Lothar G\"ottsche during a discussion about \cite{MR3904449} in Trieste in June 2017. I obtained
the key part of the present paper (the proof by induction of Theorem \ref{key_thm}) in the
days following this discussion. 
I also thank Rahul Pandharipande for several useful discussions on related topics and Hülya Argüz for her help with the figures. Finally, I thank the anonymous referee for many useful comments and suggestions that have 
greatly contributed to the improvement of the exposition. During the preparation of this paper I was supported by Dr.\ Max R\"ossler, the Walter Haefner Foundation and the ETH Z\"urich Foundation.

\section{Floor diagrams} \label{section_floor_diagrams}
We review
$h$-transverse toric surfaces in \S \ref{section_h_transverse}, floor diagrams 
in \S \ref{section_floor}, and $q$-refined counts of floor diagrams in \S \ref{section_q_refined}.

\subsection{$h$-transverse toric surfaces} \label{section_h_transverse}

\begin{figure}
\center{\scalebox{.4}{\input{Db.pspdftex}}}
\caption{On the left, the $h$-transverse balanced collection 
$\Delta_d^{\PP^2}$. On the right, the corresponding toric surface $\PP^2$.}
\label{Fig:Db}
\end{figure}

\begin{defn} \label{def_h_transverse}
Let $\Delta$ be a balanced collection of vectors in $\Z^2$,
that is, a finite collection of vectors in 
$\Z^2-\{0\}$ summing up to zero. 
Following  \cite{MR2500574},  $\Delta$ is 
\emph{$h$-transverse} if for every $v =(v_x,v_y)\in \Delta$,
we have either
 $v_x =\pm 1$, or both $v_x=0$ and $v_y=\pm 1$.
    
In other words, $\Delta$ is $h$-transverse if all non-vertical vectors in $\Delta$ have an horizontal component equal to $+1$ or $-1$
and all the vertical vectors in $\Delta$ are of the form $(0,1)$
or $(0,-1)$, .
\end{defn}

The property of being 
$h$-transverse is not invariant under the natural action 
of $GL(2,\Z)$ on $\Z^2$: it depends on the notion of horizontal and vertical directions in $\Z^2$.

\begin{defn} \label{def_X_delta}
Given $\Delta$ a $h$-transverse balanced collection of vectors in $\Z^2$ as in Definition 
\ref{def_h_transverse}, we denote by $X_{\Delta}$ 
the toric surface 
over $\C$ whose fan has for set of rays
\begin{equation}
 \{ \R_{\geq 0} v\,|\, v \in \Delta\}\,.   
\end{equation}
\end{defn}

If $(0,-1)$ appears in $\Delta$,
we denote by $D_b$
the toric divisor of $X_{\Delta}$ dual to the ray $\R (0,-1)$,
else we set $D_b \coloneqq \emptyset$.
If $(0,1)$ appears in $\Delta$, we denote by $D_t$ the toric divisor of $X_{\Delta}$
dual to the ray $\R_{\geq 0} (0,1)$, else we set 
$D_t \coloneqq \emptyset$. The indices ``$b$" and ``$t$" in $D_b$ and $D_t$
refer respectively to ``bottom" and ``top".
We denote by $d_b$ (resp.\ $d_t$) the number of occurrences of $(0,-1)$ (resp. $(0,1)$) in $\Delta$.

Let 
\begin{equation} \label{eq_delta_l}
\Delta_l \coloneqq \{ v=(v_x,v_y) \in \Delta\,|\,v_x=-1\} \,,
\end{equation}
and 
\begin{equation} \label{eq_delta_r}
\Delta_r \coloneqq \{v=(v_x,v_y) \in \Delta\,|\,v_x=1\} \,,
\end{equation}
where the indices ``$l$" and ``$r$" refer respectively to ``left"
and ``right".

By Definition \ref{def_h_transverse}, 
$\Delta_l \cup \Delta_r$ is the subset of non-horizontal vectors in 
$\Delta$. As $\Delta$ is balanced, 
 $\Delta_l$ and $\Delta_r$ have the same cardinality, 
which we denote $h$ and call the ``height" of $\Delta$.
It follows from Definition \ref{def_h_transverse} that
\begin{equation} \label{eq_id}
d_b + d_t + 2h = |\Delta|\,,
\end{equation}
where $|\Delta|$ is the cardinality of $\Delta$.

\begin{defn} \label{def_beta_delta}
By standard toric geometry
(see \cite[\S 3.4]{Fultontoric}), there exists a unique homology class $\beta_{\Delta} \in H_2(X_{\Delta},\Z)$ such that
for every ray $\R_{\geq 0}v$ of the fan of $X_\Delta$, with primitive 
$v\in \Z^2$, the intersection number $\beta_{\Delta} \cdot D_v$ of 
$\beta_\Delta$ with the divisor $D_v$ of $X_\Delta$ dual to $\R_{\geq 0}v$
is equal to the number of occurrences of $v$ in $\Delta$. In particular, we have $\beta_{\Delta} \cdot D_b =d_b$
and $\beta_{\Delta} \cdot D_t = d_t$.
\end{defn}

In this paper, we focus on the following two examples.
\begin{ex} \label{ex_P2}
    Let $d$ be a positive integer and let $\Delta^{\PP^2}_d$ be the balanced collection of vectors in $\Z^2$ consisting of $d$ copies of $(-1,0)$, $d$ copies of $(0,-1)$
    and $d$ copies of $(1,1)$, see Figure \ref{Fig:Db}. Then $X_{\Delta^{\PP^2}_d} = \PP^2$, $d_b=d$, $d_t=0$, $h=d$
    and $\beta_{\Delta^{\PP^2}_d}$ is the class of a degree $d$ curve in $\PP^2$.
\end{ex}

\begin{figure}
\center{\scalebox{.4}{\input{Fk.pspdftex}}}
\caption{On the left, the $h$-transverse balanced collection 
$\Delta_{h,d}^{\F_k}$. On the right, the corresponding toric surface $\F_k$.}
\label{Fig:Fk}
\end{figure}

\begin{ex} \label{ex_hirzebruch}     
Let $k$ be an integer and let $d$ and $h$ be non-negative integers. Let $\Delta^{\F_k}_{h,d}$ be the balanced collection of vectors in $\Z^2$ consisting of $d+kh$ copies of $(0,-1)$, $d$ copies of $(0,1)$, $h$ copies of $(-1,0)$ and $h$ copies of $(1,k)$, see Figure \ref{Fig:Fk}. Then $X_{\Delta^{\F_k}_{h,d}}$ is the Hirzebruch surface $\F_k=\PP(\cO_{\PP^1} \oplus 
    \cO_{\PP^1}(k))$, $d_b=d+kh$, $d_t=d$, $D_b$ is the toric divisor $D_k$
    such that $D_k^2=k$ and $D_t$ is the toric divisor $D_{-k}$ such that $D_{-k}^2=-k$.
    Denoting by $F$ the class of a $\PP^1$-fiber of the natural projection $p \colon \F_k \rightarrow \PP^1$, we have 
    \begin{equation}\label{eq_beta_hirzebruch}
    \beta_{\Delta^{\F_k}_{h,d}} \cdot D_{k}=d+kh\,,\,\,\, \beta_{\Delta^{\F_k}_{h,d}} \cdot D_{-k}=d\,,\,\,\,
    \beta_{\Delta^{\F_k}_{h,d}} \cdot F=h\,, 
    \end{equation}
    and so
    \begin{equation}\beta_{\Delta^{\F_k}_{h,d}} =h D_{k}+d F \,.
    \end{equation}
\end{ex}

\subsection{Floor diagrams} \label{section_floor}

In this paper, we adopt the following conventions on graphs.
\emph{Graphs} are connected, have finitely many vertices, 
finitely many \emph{bounded} edges, and finitely many \emph{unbounded} edges.
A bounded edge connects two distinct vertices, whereas an unbounded edge is incident to a single vertex. A \emph{weighted} graph is a graph endowed with a choice a positive integer $w_E$ for every edge $E$, called the \emph{weight} of $E$.
An \emph{oriented} graph is a graph endowed with a choice of orientation of every edge.

Up to notational details, the following definitions are due to  Brugall\'e and Mikhalkin \cite{MR2500574}. 

\begin{defn} \label{def_floor_diagram}
Let $\Delta$ be a $h$-transverse balanced collection of vectors in $\Z^2$
as in Definition \ref{def_h_transverse} and $n$ a nonnegative integer.
A \emph{$(\Delta,n)$-floor diagram} $\fD$ is the data of a weighted oriented graph $\Gamma$ and of bijections
\begin{align} \label{eq_bij}
    v_l \colon & V(\Gamma) \simeq \Delta_l \\
    &V \mapsto v_l(V)\,, \nonumber
\end{align}
and 
\begin{align}
    v_r \colon & V(\Gamma) \simeq \Delta_r \\
    &V \mapsto v_r(V)\,, \nonumber
\end{align}
where $V(\Gamma)$ is the set of vertices of $\Gamma$ and 
$\Delta_l$, $\Delta_r$ are as in \eqref{eq_delta_l}-\eqref{eq_delta_r},
such that
\begin{itemize}
    \item[(i)] the oriented graph $\Gamma$ is acyclic, that is, does not contain any oriented cycle,
    \item[(ii)] the first Betti number of $\Gamma$ equals $g_{\Delta,n} \coloneqq
    n+1-|\Delta|$, where $|\Delta|$ is the cardinality of $\Delta$,
    \item[(iii)] there are exactly $d_b$ incoming unbounded edges and 
    $d_t$ outgoing unbounded edges, and all of them have weight $1$,
    \item[(iv)] for every $V$ vertex of $\Gamma$, writing \begin{equation} v_l(V)=(-1,v_l(V)_y) \in \Z^2\end{equation}
    and \begin{equation} v_r(V)=(1,v_r(V)_y) \in \Z^2\,,
    \end{equation} the sum of weights of incoming edges minus the sum of weights of outgoing edges is equal to $v_l(V)_y + v_r(V)_y$. 
\end{itemize}
\end{defn}

\begin{lem}
Let $\fD$ be a 
$(\Delta, n)$-floor diagram of underlying graph $\Gamma$.
Then 
\begin{equation}
    |V(\Gamma)|+|E(\Gamma)|=n \,,
\end{equation}
where $|V(\Gamma)|$ 
(resp.\ $|E(\Gamma)|$) is the cardinality of the set of vertices of $\Gamma$.
\end{lem}

\begin{proof}
Let $|E^b(\Gamma)|$ (resp.\ $|E^\infty(\Gamma)|$) be the cardinality of the set of bounded (resp.\ unbounded) edges of $\Gamma$, and $h$ the height of 
$\Delta$. We have $|V(\Gamma)|=h$ by \eqref{eq_bij}, $|E^\infty(\Gamma)|=d_b+d_t$ by Definition 
\ref{def_floor_diagram}(iii), and 
$|E^b(\Gamma)|-|V(\Gamma)|+1=g_{\Delta,n}=n+1-|\Delta|$ by 
Definition \ref{def_floor_diagram}(i)-(ii).
So $|E^b(\Gamma)|=n-|\Delta|+h$ and
\begin{equation}
    |V(\Gamma)|+|E(\Gamma)|=
    |V(\Gamma)|+|E^b(\Gamma)|+|E^\infty(\Gamma)|
    =h+(n-|\Delta|+h)+d_b+d_t\,,
\end{equation}
and the result follows from \eqref{eq_id}.
\end{proof}

\begin{defn} \label{def_marking}
Let $\fD$ be a $(\Delta, n)$-floor diagram, of underlying graph $\Gamma$, with set of vertex $V(\Gamma)$ and set of edges $E(\Gamma)$.
As $\Gamma$ is acyclic, oriented edges of $\Gamma$ define a partial ordering on $V(\Gamma) \cup E(\Gamma)$.
A \emph{marking} of $\fD$ is an increasing bijection between the ordered set $\{1,\dots,n\}$ and the partially ordered set 
$V(\Gamma) \cup E(\Gamma)$.
\end{defn}

\begin{defn}
Two marked floor diagrams are \emph{isomorphic} if there exists an homeomorphism of their underlying graphs, compatible with the orientations, the weights, the bijections 
$v_l$ and $v_r$, and the markings.
\end{defn}

\begin{defn} \label{def_mult}
The \emph{multiplicity} of a marked floor diagram $\fD$ is
the positive integer 
\begin{equation} m_\fD \coloneqq \prod_E w_E^2 \,,
\end{equation}
where the product is over the edges of $\fD$ and $w_E$ is the weight of 
the edge $E$.
\end{defn}
The multiplicity of a marked floor diagram only depends on its isomorphism class.

\begin{defn} \label{def_N_floor}
The \emph{count with multiplicity} of marked 
$(\Delta,n)$-floor diagrams is 
\begin{equation} 
N^{\Delta,n}_{\floor} \coloneqq \sum_{\fD} m_\fD  \,,
\end{equation}
where the sum is over the isomorphism classes of marked 
$(\Delta,n)$-floor diagrams.
\end{defn}

The main result of Brugall\'e and Mikhalkin \cite{MR2500574} is that the count $N^{\Delta,n}_{\floor}$ with multiplicity 
of marked 
$(\Delta,n)$-floor diagrams
coincides with the number of curves of genus $g_{\Delta,n}$ and class 
$\beta_{\Delta}$ (see Definition \ref{def_beta_delta}) in the toric surface $X_\Delta$ (see Definition \ref{def_X_delta}), passing through $n$ fixed points in general position and intersecting transversally the toric divisors $D_b$ and 
$D_t$.

\subsection{$q$-refined counts of floor diagrams}
\label{section_q_refined}
For every nonnegative integer $m$,
we define the $q$-integer $[m]_q$ by
\begin{equation}\label{eq_q_integer}
[m]_q \coloneqq \frac{
q^{\frac{m}{2}} - q^{-\frac{m}{2}}
}{q^{\frac{1}{2}} - q^{-\frac{1}{2}}}
=q^{-\frac{m-1}{2}} (1+q+\dots+q^{m-1}) \,.
\end{equation}
It is a Laurent polynomial in a formal 
variable $q^{\frac{1}{2}}$, reducing to the integer $m$ in the limit $q^{\frac{1}{2}}
\rightarrow 1$.
The following definitions are due to Block and Göttsche \cite{MR3579972}.

\begin{defn} \label{def_q_mult}
The \emph{$q$-refined multiplicity} of a marked floor diagram $\fD$ is 
\begin{equation}m_\fD(q^{\frac{1}{2}}) \coloneqq \prod_E [w_E]_q^2 \,,\end{equation}
where the product is over the edges of $\fD$ and $w_E$ is the weight of 
the edge $E$.
\end{defn}
The $q$-refined multiplicity of a marked floor diagram only depends on its isomorphism class.  

\begin{defn} \label{def_N_delta_q}
The \emph{count with $q$-multiplicity} of 
$(\Delta, n)$-floor diagrams is 
\begin{equation} \label{q_mult}
N^{\Delta,n}_{\floor}(q^{\frac{1}{2}}) = \sum_{\fD} m_\fD(q^{\frac{1}{2}})\,,
\end{equation}
where the sum is over the isomorphism classes of marked $(\Delta,n)$-floor diagrams.
\end{defn}

In the unrefined limit $q^{\frac{1}{2}}
\rightarrow 1$, the $q$-refined multiplicity $m_\fD(q^{\frac{1}{2}})$
in Definition \ref{def_q_mult}
reduces to the ordinary multiplicity 
$m_\fD$ in Definition \ref{def_mult}, 
and so the $q$-refined count 
$N^{\Delta,n}_{\floor}(q^{\frac{1}{2}})$ in Definition
\ref{def_N_delta_q}
reduces to the unrefined count
$N^{\Delta,n}_{\floor}$ in Definition \ref{def_N_floor}.

\section{Relative Gromov-Witten theory}
\label{section_relative_gw_theory}

In \S \ref{section_relative_gw_general}, we review the general framework of relative Gromov-Witten theory \cite{MR1882667}\cite[\S 3.4]{mnop2}
and 
we introduce notations for relative Gromov-Witten invariants of geometries relative to two disjoint smooth divisors. 
We describe lambda classes in \S \ref{section_lambda_classes} 
and we prove a gluing formula for lambda classes in 
\S \ref{section_top_lambda_gluing}.
In \S \ref{section_vanishing}, we prove the vanishing of a class of 
relative Gromov-Witten invariants of surfaces with insertion of a lambda class.

\subsection{Relative Gromov-Witten invariants}
\label{section_relative_gw_general}

Let $X$ be a smooth projective variety over $\C$ and 
$D_1, D_2 \subset X$ two disjoint smooth divisors.
Let $g, r \in \Z_{\geq 0}$ and  $\beta \in H_2(X,\Z)$ a curve class such that 
$\beta \cdot D_1 \geq 0$ and $\beta \cdot D_2 \geq 0$.
Let
$\eta^1=(\eta_j^1)_{1\leq j \leq \ell(\eta^1)}$, 
$\eta^2=(\eta_j^2)_{1\leq j \leq \ell(\eta^2)}$ be two (unordered) partitions of $\beta \cdot D_1$ and $\beta \cdot D_2$ respectively.
We choose an ordering of the parts of $\eta^1$ and $\eta^2$ and we denote by
$\vec{\eta}^1=(\vec{\eta}_j^1)_{1\leq j \leq \ell(\eta^1)}$ and 
$\vec{\eta}^2=(\vec{\eta}_j^2)_{1\leq j \leq \ell(\eta^2)}$
the resulting ordered partitions.

The moduli stack of relative stable maps
\begin{equation} \label{eq_moduli_space}
\overline{M}_{g,r}(X/D_1\cup D_2,\beta,\vec{\eta}^1,\vec{\eta}^2)
\end{equation}
is a proper Deligne-Mumford stack which compactifies the moduli space of genus $g$ class $\beta$
stable maps 
\begin{equation}f \colon  (C,x_1,\dots,x_r,y_1,\dots,y_{\ell(\vec{\eta}^1)},
z_1,\dots,z_{\ell(\vec{\eta}^2)}) \rightarrow X 
\end{equation}
such that $f(x_j) \notin D$, $f(y_j) \in D$, $f(z_j) \in D$
and the contact order of $f$ along $D_1$ (resp.\ $D_2$) at the marked point $y_j$ (resp.\ $z_j$) is $\vec{\eta}^1_j$ (resp.\ $\vec{\eta}^2_j$) \cite{MR1882667}. In general, the target of a relative stable map is allowed to be an expanded degeneration
of $X$ along $D_1$ and $D_2$  \cite{MR1882667}.
The virtual dimension of the moduli stack of relative stable maps is \begin{align} \label{eq_vdim}
   & \vdim \overline{M}_{g,r}(X/D_1\cup D_2,\beta,\vec{\eta}^1,\vec{\eta}^2) \\
    &= (1-g)(\dim X-3)+\int_\beta c_1(T_X)+r -\sum_{j=1}^{\ell(\eta^1)} (\eta_j^1-1) -\sum_{j=1}^{\ell(\eta^2)} (\eta_j^2-1) \nonumber \,,
\end{align}
where $c_1(T_X) \in H^2(X,\Z)$ is the first Chern class of the tangent bundle $T_X$ of $X$.

Relative Gromov-Witten invariants of $X/D_1 \cup D_2$ are defined by integration against the virtual fundamental class of the moduli stack of relative stable maps.
Given even degree cohomology classes
\begin{equation}
 \alpha_1,\dots,\alpha_r \in H^{*}(X,\Q) \,,
\end{equation}
\begin{equation}
 \delta^1=(\delta_j^1)_{1\leq j \leq \ell(\eta^1)} \,\,\,\text{with}\,\,\,\delta_j^1 \in H^{*}(D_1,\Q) \,,
\end{equation}
\begin{equation}
 \delta^2=(\delta_j^2)_{1\leq j \leq \ell(\eta^2)} \,\,\,\text{with}\,\,\,\delta_j^2 \in H^{*}(D_2,\Q) \,,
\end{equation}
\begin{equation}
\gamma \in H^{*}(\overline{M}_{g,r}(X/D_1\cup D_2,\beta,\vec{\eta}^1,\vec{\eta}^2),\Q)\,,
\end{equation}
let 
\begin{align} \label{eq_gw}
&    \langle \eta^2,\delta^2|\, \gamma; \alpha_1,\dots,\alpha_r|\,\eta^1,\delta^1 \rangle_{g,\beta}^{X/D_1 \cup D_2}   \coloneqq \frac{1}{|\Aut(\eta^1,\delta^1)|} \frac{1}{|\Aut(\eta^2,\delta^2)|}\\
&    \times \int_{[\overline{M}_{g,r}(X/D_1\cup D_2,\beta,\vec{\eta}^1,\vec{\eta}^2)]^{\virt}} \gamma \prod_{l=1}^r \ev_r^{*}(\alpha_l) \prod_{m=1}^{\ell(\eta^1)} (\ev_m^1)^{*}(\delta_m^1) 
    \prod_{n=1}^{\ell(\eta^2)} (\ev_n^2)^{*}(\delta_n^2) \nonumber \,.
\end{align}
Here, $|\Aut(\eta^1,\delta^1)|$
(resp.\ $|\Aut(\eta^2,\delta^2)|$) is the order of the group of 
permutation symmetries of the set of pairs $(\eta^1_j,\delta_j^1)$ for
$1 \leq j \leq \ell(\eta^1)$
(resp.\ $(\eta_j^2,\delta_j^2)$ for $1 \leq j \leq \ell(\eta^2)$).
Moreover, $\ev_r$
is the morphism from the moduli stack to $X$ defined by the evaluation at the interior marking $x_r$, 
and $\ev_m^1$ (resp.\ $\ev_n^2$) are the morphisms from the moduli stack to $D_1$ (resp.\ $D_2$) defined by the evaluation at the relative marking 
$y_m$ (resp.\ $z_n$). As we are assuming that all the cohomology classes are even and so commute for the cup-product, the relative Gromov-Witten invariant
\eqref{eq_gw} only depends on the unordered partitions 
$\eta^1$, $\eta^2$, and not on the orderings chosen to define 
$\vec{\eta}^1$, $\vec{\eta}^2$. The automorphism prefactors in 
\eqref{eq_gw} effectively allow us to forget the ordering on the sets of relative markings with given contact order and cohomology classs insertion.

When working with generating series summing over the genus, we will always weight invariants as in \eqref{eq_gw} by $u^{2g-2+\ell(\eta^1)+\ell(\eta^2)}$, where $u$ is a formal variable keeping track of the genus.  
This convention simplifies the writing of the degeneration formula \cite{MR1938113}.

\subsection{Lambda classes}
\label{section_lambda_classes}
In the definition \eqref{eq_gw} of relative Gromov-Witten invariants, 
we allow for the insertion of a class 
$\gamma$ in the cohomology of the moduli stack of relative stable maps. 
In this section, we review a particular family of such cohomology classes
called lambda classes. Relative Gromov-Witten invariants with insertion of a lambda class are the main objects of study of the present paper.

Let $\cM$ be a finite type Deligne-Mumford stack over $\C$.
Let $\pi \colon \cC \rightarrow \cM$ be a family of genus $g$ prestable curves, that is, 
$\pi$ is a flat proper morphism and geometric fibers of $\pi$ 
are connected nodal curves of arithmetic genus $g$. 
The relative dualizing sheaf $\omega_\pi$ of 
$\pi$ is a line bundle on $\cC$ and its pushforward 
$\E_\pi \coloneqq \pi_{*} \omega_\pi$ is a rank $g$ vector bundle on 
$\cM$, called the Hodge bundle.
Following Mumford
\cite[\S 4]{MR717614}, Chern classes of the Hodge bundle are denoted
\begin{equation} \label{eq_lambda}
\lambda_{j,\pi} \coloneqq c_j(\E_\pi) 
\in H^{2j}(\cM,\Q)\,,\,\,\,\,\, 0\leq j \leq g \,,\end{equation}
and called \emph{lambda classes}. The 
\emph{top lambda class} is $\lambda_{g,\pi} \in H^{2g}(\cM,\Q)$.

According to \cite[(5.4)-(5.5)]{MR717614}, the total Chern classes 
$c(\E_\pi)=\sum_{j=0}^g \lambda_{j,\pi}$ and $c(\E^\vee_\pi)=\sum_{j=0}^g (-1)^j \lambda_{j,\pi}$
obey the identity $c(\E_\pi)\,c(\E^\vee_\pi)=1$. In particular, taking the component of complex degree $2g$, we have 
\begin{equation}\label{eq_lambda_square_zero}
    \lambda_{g,\pi}^2=
\begin{cases}    
    1 &\mathrm{if}~g=0\\
    0 &\mathrm{if}~g>0\,.
\end{cases}
\end{equation}

In the following sections of this paper, we apply the construction of 
lambda classes for $\cM$ a moduli stack of relative stable maps and 
$\pi \colon \cC \rightarrow \cM$ the universal source curve.
In such case, we simply denote $\lambda_j$ for $\lambda_{j,\pi}$.

\subsection{Lambda classes and gluing}
\label{section_top_lambda_gluing}
We review the behavior of lambda classes under gluing, following the exposition given in \cite{MR3904449}.

\begin{prop} \label{lem_gluing1}
Let $\cM$ be a finite type Deligne-Mumford stack over 
$\C$. 
Let $\Gamma$ be a graph of first Betti number 
$g_\Gamma$. For every vertex $V$ of $\Gamma$, let
$\pi_V \colon \cC_V
\rightarrow \cM$ be a family of genus $g_V$ prestable curves.
For every edge $E$ of $\Gamma$, connecting vertices $V_1$ and $V_2$, let $s_{E,1}$ and $s_{E,2}$ be two sections of $\pi_{V_1}$ and 
$\pi_{V_2}$ avoiding nodes.
Denote by $\pi \colon \cC \rightarrow \cM$ the family of genus $g \coloneqq g_\Gamma +\sum_V g_V$
prestable curves obtained by gluing together transversally 
the sections $s_{V_1,E}$ and $s_{V_2,E}$ for every edge $E$ of $\Gamma$.
Then, for every $0\leq j\leq g$,
\begin{equation} \label{eq_lambda_gluing}
\lambda_{j,\pi} = \sum_{\substack{(j_V)_V\\
0 \leq j_V \leq g_V\\ \sum_V j_V=j}} 
\prod_V \lambda_{j_V,\pi_V}\,.
\end{equation}
In particular, $
\lambda_{g-j,\pi}=0$
if $g_\Gamma>j$.
\end{prop}

\begin{proof}
We denote by $V(\Gamma)$ (resp.\ $E(\Gamma)$) the set of vertices
(resp.\ edges) of $\Gamma$.
For every edge $E \in E(\Gamma)$, let $s_E \colon \cM \rightarrow \cC$
be the family of nodes defined by $E$.  
Applying $R \pi_*$ to the short exact sequence 
\begin{equation} 0 \rightarrow \cO_\cC
\rightarrow \bigoplus_{V 
\in V(\Gamma)} \cO_{\cC_V}
\rightarrow \bigoplus_{E \in E(\Gamma)} \cO_{s_E(\cM)}
\rightarrow 0\,,\end{equation}
we obtain the long exact sequence
\begin{equation}\label{eq_long_exact} 0 \rightarrow \pi_* \cO_{\cC}
\rightarrow \bigoplus_{V \in V(\Gamma)} 
\pi_* \cO_{\cC_V}
\rightarrow 
\bigoplus_{E \in E(\Gamma)}
\pi_* \cO_{s_E(\cM)}
 \rightarrow 
R^1 \pi_* \cO_\cC 
\xrightarrow{f} 
\bigoplus_{V \in V(\Gamma)} R^1 \pi_*
\cO_{\cC_V} \rightarrow 0 \,.\end{equation}
The kernel of the map $f$ in \eqref{eq_long_exact}
is a free sheaf of rank
$|E(\Gamma)|-|V(\Gamma)|+1=g_\Gamma$.
Using Serre duality, we find the short exact sequence
\begin{equation} \label{eq_short} 0 \rightarrow 
\bigoplus_{V \in V(\Gamma)}
\E_{\pi_V}
\rightarrow \E_\pi
\rightarrow \cO^{\oplus g_\Gamma}
\rightarrow 0\,.\end{equation}
The result follows from the Whitney sum formula for Chern classes applied to \eqref{eq_short} and the vanishing of the Chern classes of the trivial vector bundle $\cO^{\oplus g_\Gamma}$.
\end{proof}

\subsection{A vanishing result for surfaces}
\label{section_vanishing}
In this section, we prove as a consequence of \eqref{eq_lambda_square_zero} the vanishing of a particular class of relative Gromov-Witten invariants of surfaces.
We use the notations introduced in 
\S \ref{section_relative_gw_general}.

\begin{lem} \label{lem_vanishing}
Let $X$ be a smooth projective surface such that 
$H^1(X,\cO_X)=0$ and $D_1$, $D_2$ two disjoint smooth divisors of $X$. Let $\beta \in H_2(X,\Z)$ be an effective curve class such that $\beta \cdot \beta=0$ and the curves of class $\beta$ form 
a 1-dimensional linear system of smooth rational curves in $X$. Let $k \in \Z_{\geq 1}$. Assume that for every map
$f \colon Y \rightarrow X$ such that $Y$ is a connected curve
and $f_{*}[Y]=k \beta$, there exists a curve $C\subset X$ of class $\beta$ and a map $g \colon Y \rightarrow C$ such that 
$f$ is the composition of $g$ with the inclusion 
$C \subset X$.
Then, for every $g >0$
and partition
$\eta^1$ (resp.\ $\eta^2$) of $k\beta \cdot D_1$
(resp.\ $k\beta \cdot D_2$), we have 
\begin{equation} \langle \eta^2,\delta^2|\, (-1)^g \lambda_g; \alpha_1|\,\eta^1,\delta^1 \rangle_{g,k\beta}^{X/D_1 \cup D_2}  =0\,, 
\end{equation}
where
\begin{itemize}
\item[(i)]$\delta^1=(\delta_j^1)_{1\leq j\leq \ell(\eta^1)}$
is given by $\delta_j^1=1 \in H^0(D^1,\Z)$ for all $j$,  \item[(ii)]$\delta^2=(\delta_j^2)_{1\leq j\leq \ell(\eta^2)}$
is given by $\delta_j^2=1 \in H^0(D^2,\Z)$ for all $j$,
\item[(iii)]$\alpha_1 \in H^4(X,\Z)$ is the cohomology class Poincaré dual to a point,
\item[(iv)] the class $\gamma$ in \eqref{eq_gw} on the moduli space of relative stable maps is taken to be $(-1)^g \lambda_g$, where $\lambda_g$ is the top lambda class as in 
\eqref{eq_lambda}.
\end{itemize}
\end{lem}

\begin{proof}
As the linear system of curves of class $\beta$ is of dimension $1$, there exists $p \in X$ such that $p \notin D_1 \cup D_2$ and $p$ is not a base point of the linear system. 
For such point $p$, there exists a unique smooth rational curve $C\simeq \PP^1 \subset X$ of class $\beta$ passing through $p$. The composition of a relative stable map to $C$ with the inclusion $C \subset X$ defines a closed embedding
\begin{equation} \label{eq_substack}
\overline{M}_{g,1}(C/((D_1\cup D_2)\cap C),k[C],\vec{\eta}^1,\vec{\eta}^2) 
\subset \overline{M}_{g,1}(X/D_1\cup D_2,k\beta,\vec{\eta}^1,\vec{\eta}^2)\,.
\end{equation}
By our assumption, \eqref{eq_substack} is exactly the substack of relative stable maps whose image contains $p$. 

On the other hand, the perfect obstruction theories for relative stable maps to $C$ and to $X$ differ by the top Chern class of the bundle whose fiber over the relative stable map 
$f \colon Y \rightarrow X$ is $H^1(Y,f^{*}N_{C|X})$, where 
$N_{C|X}$ is the normal bundle to $C$ in $X$. As $C \simeq \PP^1$ and $\beta \cdot \beta=0$, we have $N_{C|X}=\cO_C$, and so $H^1(Y, f^{*}N_{C|X})=H^0(C,\omega_{C})^\vee$ by Serre duality. Thus, the perfect obstruction theories differ by 
$c_g(\E)=(-1)^g \lambda_g$. Therefore,
\begin{align}
    &\alpha_1 \cap [ \overline{M}_{g,1}(X/D_1\cup D_2,\beta,\vec{\eta}^1,\vec{\eta}^2)]^\virt \\
    &=(-1)^g \lambda_g \cap \,\alpha_1' \,\cap [\overline{M}_{g,1}(C/((D_1\cup D_2)\cap C),k[C],\vec{\eta}^1,\vec{\eta}^2) ]^{\virt} \nonumber
\end{align}
where $\alpha_1' \in H^2(C,\Z)$ is the cohomology class Poincaré dual of a point. Hence
\begin{equation}
    \langle \eta^2,\delta^2|\, (-1)^g \lambda_g; \alpha_1|\,\eta^1,\delta^1 \rangle_{g,k\beta}^{X/D_1 \cup D_2} = 
    \langle \eta^2,\delta^2|\, \lambda_g^2; \alpha_1'|\,\eta^1,\delta^1 \rangle_{g,k[C]}^{C/((D_1 \cup D_2) \cap C)}\,,
\end{equation}
and the vanishing follows from the fact \eqref{eq_lambda_square_zero} that 
$\lambda_g^2=0$ for $g>0$.
\end{proof}

\begin{lem} \label{lem_vanishing_2}
Let $X$ be a smooth projective surface such that 
$H^1(X,\cO_X)=0$ and $D_1$, $D_2$ two disjoint smooth divisors of $X$. Let $\beta \in H_2(X,\Z)$ be an effective curve class such that $\beta \cdot \beta=0$ and the curves of class $\beta$ form 
a 1-dimensional linear system of smooth rational curves in $X$. Let $k \in \Z_{\geq 1}$. Assume that for every map
$f \colon Y \rightarrow X$ such that $Y$ is a connected curve
and $f_{*}[Y]=k \beta$, there exists a curve $C\subset X$ of class $\beta$ and a map $g \colon Y \rightarrow C$ such that 
$f$ is the composition of $g$ with the inclusion 
$C \subset X$. Assume further that 
$\beta \cdot D_1=\beta \cdot D_2=1$, and that 
$\eta^1$, $\eta^2$ are both the trivial $1$-part partition of $k$.
\begin{itemize}
\item[(i)] If $\delta^1_1=1 \in H^0(D^1,\Z)$ and $\delta^2_1=1 \in H^0(D_2,\Z)$, then, denoting 
 $\alpha_1 \in H^4(X,\Z)$ be the cohomology class Poincaré dual to a point, we have
\begin{equation} \label{eq_lem_van_2}\langle \eta^2,\delta^2|\, (-1)^g \lambda_g; \alpha_1|\,\eta^1,\delta^1 \rangle_{g,k\beta}^{X/D_1 \cup D_2}  =\begin{cases}1 \,\,\,\mathrm{if}\, g=0\\
0 \,\,\,\mathrm{else}\,.
\end{cases} 
\end{equation}
\item[(ii)]If  $\delta^1_1=1 \in H^0(D^1,\Z)$ and  $\delta^2_1 \in H^2(D_2,\Z)$ is the class Poincaré dual to a point,  
then
\begin{equation} \label{eq_lem_van_3}\langle \eta^2,\delta^2|\, (-1)^g \lambda_g|\,\eta^1,\delta^1 \rangle_{g,k\beta}^{X/D_1 \cup D_2}  =\begin{cases}\frac{1}{k} \,\,\,\mathrm{if}\, g=0\\
0 \,\,\,\mathrm{else}\,.
\end{cases} 
\end{equation}
\end{itemize}
\end{lem}
\begin{proof}
The vanishings for $g>0$ follows from Lemma \ref{lem_vanishing} for (i) and from a parallel proof for (ii). 
As in the proof of Lemma \ref{lem_vanishing}, we fix a general point $p \in X$ and let $C\simeq \PP^1 \subset X$ be the unique curve of class $\beta$ passing through $p$. 
There exists a unique degree $k$ map 
$f \colon Y \simeq \PP^1 \rightarrow C \simeq \PP^1$ fully ramified over $D_1 \cap C$ and $D_2 \cap C$. The automorphism group of $f$ is 
$\Z/k$ and so we obtain 
$\langle \eta^2,\delta^2|\,\eta^1,\delta^1 \rangle_{0,k\beta}^{X/D_1 \cup D_2}=1/k$ in (ii).
For (i), the point insertion at the interior marking kills all the non-trivial automorphisms and so $\langle \eta^2,\delta^2| \alpha_1|\,\eta^1,\delta^1 \rangle_{0,k\beta}^{X/D_1 \cup D_2}=1$.  
\end{proof}

\section{The key calculation} \label{section_computation}

In this section, we prove our key technical result, Theorem \ref{key_thm}, 
which computes explicitly a class of relative Gromov-Witten invariants 
$N_{g,\rel}^{\mu\nu\rho\sigma}$
of blown-up Hirzebruch surfaces
$\F_k^{\rho\sigma}$. In the following \S 
\ref{section_main_result}, we only use 
the relative Gromov-Witten invariants 
$N_{g,\rel}^{\mu\nu\emptyset\emptyset}$
of the Hirzebruch surfaces
$\F_k^{\emptyset \emptyset}=\F_k$ without additional blow-ups. However, our strategy of calculation, based on on the idea of trading 
relative conditions for blow-ups, requires us to consider the more general invariants $N_{g,\rel}^{\mu\nu\rho\sigma}$ even if one is ultimately only interested in the invariants $N_{g,\rel}^{\mu\nu\emptyset\emptyset}$ .

\subsection{Blown-up Hirzebruch surfaces}

We fix two nonnegative integers $k$ and $d$. As in \S \ref{ex_hirzebruch}, we consider the Hirzebruch surface $\F_k$, with its toric divisors 
$D_k$, $D_{-k}$ such that $D_k^2=k$, 
$D_{-k}^2=-k$, and we denote by $F$ the class of a $\PP^1$-fiber of $p \colon \F_k \rightarrow \PP^1$.
Let $\mu
=(\mu_j)_{1\leq j\leq \ell(\mu)}$, $\nu=(\nu_j)_{1\leq j\leq \ell(\nu)}$, 
$\rho=(\rho_j)_{1\leq j\leq \ell(\rho)}$, 
$\sigma=(\sigma_j)_{1\leq j\leq \ell(\sigma)}$
be four partitions of sums 
$|\mu|$,
$|\rho|$,
$|\nu|$,
$|\sigma|$,
such that
\begin{equation}\label{eq_sum_partition}
|\mu|+|\rho|=d \,\,\,\,
\text{and}\,\,\,\,|\nu|+|\sigma|=d+k\,.
\end{equation}

Let $\pi \colon \F_k^{\rho \sigma} \rightarrow \F_k$ be a 
blow-up of
$\F_k$ at $\ell(\rho)$ distinct points on $D_{-k}$
and $\ell(\sigma)$ distinct points on $D_k$. We denote by $E_j$, $1 \leq j \leq \ell(\rho)$,
and $F_j$, $1 \leq j \leq \ell(\sigma)$, the corresponding exceptional divisors. We still denote by $D_k$ and $D_{-k}$ the strict transforms in 
$\F_k^{\rho \sigma}$ of the divisors $D_k$ and $D_{-k}$ of $\F_k$, and by $F$ the pullback to $\F_k^{\rho \sigma}$ of the fiber class 
$F$ of $\F_k$.
We define the class $\beta_d^{\rho \sigma} \in H_2(\F_k^{\rho \sigma},\Z)$ 
by 
\begin{equation}
    \beta_d^{\rho \sigma} \coloneqq \pi^{*}(D_k+dF) -\sum_{j=1}^{\ell(\rho)} \rho_j E_j 
    -\sum_{j=1}^{\ell(\sigma)} \sigma_j F_j\,.
\end{equation}
We have the following intersection numbers
\begin{equation}
    \beta_d^{\rho \sigma} \cdot E_j =\rho_j \,, \,\,\,\,
    \beta_d^{\rho \sigma} \cdot F_j = \sigma_j \,,\,\,\,\, \end{equation}
\begin{equation}    \label{eq_intersection_numbers_2}
\beta_d^{\rho \sigma} \cdot D_{-k}=|\mu|=d-|\rho|\,,\,\,\,\,
\beta_d^{\rho \sigma} \cdot D_k=|\nu|=d+k-|\sigma|\,,\,\,\,\,
\beta_d^{\rho \sigma} \cdot F=1\,.
\end{equation}

\begin{figure}
\center{\scalebox{.3}{\input{1.pspdftex}}}
\caption{Illustration of the curves considered in the definition of the invariants $N_{g,\rel}^{\mu\nu\rho\sigma}$.}
\label{Fig:Nmunurhosigma}
\end{figure}

\subsection{Definition of the invariants $N_{g, \rel}^{\mu \nu \rho \sigma}$}
\label{def_invariants}

We define the relative Gromov-Witten invariants $N_{g,\rel}^{\mu \nu \rho \sigma}$ of 
$\F_k^{\rho\sigma}/D_k \cup D_{-k}$ by 
\begin{equation} \label{eq_gw_blowup}
    N_{g,\rel}^{\mu \nu \rho \sigma} \coloneqq \langle \mu,\delta^2|\, (-1)^g \lambda_g; \alpha_1\,|\,\nu,\delta^1 \rangle_{g,\beta_d^{\rho \sigma}}^{\F_k^{\rho\sigma}/D_k \cup D_{-k}}   \,,
\end{equation}
where we apply the general definition \eqref{eq_gw} of relative Gromov-Witten invariants to $X=\F_k^{\rho\sigma}$, 
$D_1=D_k$, $D_2=D_{-k}$, $\beta=\beta_d^{\rho\sigma}$,
$\eta^1=\mu$, $\eta^2=\nu$ and $r=1$. Note that $\mu$ and $\nu$ are indeed partitions of $\beta_d^{\rho\sigma} \cdot D_{-k}$
and $\beta_d^{\rho\sigma} \cdot D_{k}$ by \eqref{eq_intersection_numbers_2}. Moreover, in 
\eqref{eq_gw_blowup},
\begin{itemize}
    \item[(i)] we have $\delta^1=(\delta^1_j)_{1\leq j\leq \ell(\nu)}$, where $\delta^1_j \in H^2(D_k,\Z)$ is the cohomology class on $D_k$ Poincaré dual to a point for all $j$.
    \item[(ii)] we have $\delta^2=(\delta^2_j)_{1\leq j\leq \ell(\mu)}$, where $\delta^2_j \in H^2(D_{-k},\Z)$ is the cohomology class on $D_{-k}$ Poincaré dual to a point for all $j$.
    \item[(iii)] the class $\alpha_1 \in H^4(\F_k^{\rho \sigma})$ inserted at the single interior marked point is the cohomology class on $\F_k^{\rho\sigma}$ Poincaré dual to a point.
    \item[(iv)] the class $\gamma$ in \eqref{eq_gw} on the moduli space of relative stable maps is taken to be 
    $(-1)^g \lambda_g$, where $\lambda_g$ is the top lambda class, as in \eqref{eq_lambda}.
\end{itemize}

In other words,  $N_{g,\rel}^{\mu \nu \rho \sigma}$ is a virtual count of genus $g$ class $\beta_d^{\rho\sigma}$ curves
in $\F_k^{\rho\sigma}$, with contact orders along $D_{-k}$
(resp.\ $D_k$) given by $\mu$ (resp.\ $\nu$), and with fixed  
 position of the contacts points with $D_k \cup D_{-k}$
 and of a single interior marked point (see Figure \ref{Fig:Nmunurhosigma}).

\subsection{Empty partitions $\mu$ and $\nu$} \label{section_base_case}
In this section, we compute the invariants $N_{g,\rel}^{\mu\nu\rho\sigma}$ in the case where the partitions $\mu$ and $\nu$ are empty.

\begin{lem} \label{prop_base_case}
Assume that $\mu=\nu=\emptyset$. Then
\begin{equation}
     \sum_{g \geq 0} N_{g, \rel}^{\emptyset \emptyset \rho \sigma} u^{2g-2} 
     = \begin{cases}
         u^{-2} &\mathrm{if}\,\, \mathrm{all}\,\, \mathrm{the}\,\, \mathrm{parts}\,\, \mathrm{of}\, \rho\, \mathrm{and} \, \sigma \,\mathrm{are}\,\, \mathrm{equal}\,\, \mathrm{to}\,\, 1   \\
         0 & \mathrm{else}\,.
         \end{cases}
\end{equation}
\end{lem}

\begin{proof}
For $\mu =\nu=\emptyset$, we have 
$\beta_d^{\rho\sigma}\cdot D_{-k}=\beta_d^{\rho \sigma}\cdot D_k=0$
by \eqref{eq_intersection_numbers_2}.
The canonical class of $\F_k^{\rho\sigma}$
is $K_{\F_k^{\rho\sigma}}=-(D_k+D_{-k}+2F)$
and so it follows from \eqref{eq_beta_hirzebruch}
that \begin{equation} \label{eq_K}\beta_d^{\rho \sigma} \cdot K_{\F_k^{\rho \sigma}} = -2 \,.
\end{equation}
On the other hand, using that for $\mu=\nu=\emptyset$
we have $\ell(\rho)=k$ and $\ell(\sigma)=d+k$ by 
\eqref{eq_sum_partition} and so
\begin{equation}\label{eq_beta_carre}\beta_d^{\rho \sigma} \cdot \beta_d^{\rho\sigma} =k+2d - \sum_{j=1}^d \rho_j^2
-\sum_{j=1}^{d+k} \sigma_j^2
= -\sum_{j=1}^d \rho_j(\rho_j -1) - \sum_{j=1}^{d+k} \sigma_j (\sigma_j -1) \,.\end{equation}
Using the adjunction formula and 
\eqref{eq_K}-\eqref{eq_beta_carre}, a curve of class $\beta_d^{\rho \sigma}$
has arithmetic genus 
\begin{equation}
\frac{\beta_d^{\rho\sigma}\cdot (\beta_d^{\rho\sigma}+K_{\F_k^{\rho \sigma}})}{2}+1
=
- \sum_{j=1}^d \frac{ \rho_j( \rho_j -1)}{2} - \sum_{j=1}^{d+k} \frac{\sigma_j (\sigma_j -1)}{2}\,,\end{equation}
which is equal to zero if all the parts of $\rho$ and $\sigma$
are equal to $1$, and is negative else.
In particular, if one of the parts of $\rho$
or $\sigma$ is strictly greater than $1$, then the moduli space of relative stable maps used to define 
$N_{g, \rel}^{\emptyset \emptyset \rho \sigma}$ is empty and so
$N_{g, \rel}^{\emptyset \emptyset \rho \sigma}=0$.

If 
all the parts of $\rho$ and $\sigma$ are equal to $1$, then 
$\beta_d^{\rho \sigma} \cdot \beta^{\rho \sigma}=0$.
Furthermore, curves of class $\beta_d^{\rho\sigma}$ in 
$\F_k^{\rho\sigma}$ are strict transforms of curves 
in $\F_k =\PP(\cO_{\PP^1}\oplus\cO_{\PP^1}(k))$
that are graphs of rational sections of 
$\cO_{\PP^1}(k)$ of the form 
$\frac{s_{d+k}}{s_d}$, where the zeros of $s_{d+k}$ 
(resp.\ $s_d$) are the points that we blow-up on 
$D_k$ (resp.\ $D_{-k}$) to define $\F_k^{\rho\sigma}$ from $\F_k$. These rational sections are uniquely determined up to a multiplicative constant and so the linear system of curves of class $\beta_d^{\rho \sigma}$ is $1$-dimensional and consists of smooth rational curves. Thus, there is a unique curve of class $\beta_d^{\rho,\sigma}$ passing through a given general point and so $N_{0, \rel}^{\emptyset \emptyset \rho \sigma}=1$.
On the other hand, the assumptions of Lemma \ref{lem_vanishing} are satisfied and so $N_{g, \rel}^{\emptyset \emptyset \rho \sigma}=0$ if $g>0$. 

\end{proof}

\subsection{No horizontal component in bubbles} \label{section_no_horizontal}
In this section, we prove Lemma \ref{key_lemma}, a technical result on the form of the relative stable maps that contribute to the relative Gromov-Witten invariants $N_{g, \rel}^{\mu \nu \rho \sigma}$.

\begin{figure}
\center{\scalebox{.4}{\input{F1.pspdftex}}}
\caption{The expanded degeneration 
$\F_k^{\rho\sigma}[n_1,n_2]$ of $F_k^{\rho\sigma}$, with a chain of 
$n_1$ bubbles attached to $D_{-k}$ and a chain of $n_2$ bubbles attached to $D_k$.}
\label{Fig:bubbles}
\end{figure}

Recall from \cite{MR1882667} that a relative stable map to 
$\F_k^{\rho\sigma}/D_k \cup D_{-k}$ is really a stable map 
with target an expanded degeneration $\F_k^{\rho\sigma}[n_1,n_2]$
of $\F_k^{\rho\sigma}$ along $D_k$ and $D_{-k}$ for some $n_1$ and $n_2$. More precisely, $\F_k^{\rho\sigma}[n_1,n_2]$ is obtained from 
$\F_k^{\rho\sigma}$ by $n_1$ successive degenerations to the normal cone of $D_{-k}$ and $n_2$ successive degenerations to the normal cone of $D_{k}$. Concretely,  $\F_k^{\rho\sigma}[n_1,n_2]$ is obtained from 
$\F_k^{\rho\sigma}$ by gluing a length $n_1$ chain of bubbles $B_1,\dots,B_{n_1}$ along $D_{-k}$, where each bubble $B_j$ is isomorphic to a 
$\PP^1$-bundle over $D_{-k}$, and a length $n_2$
chain of bubbles 
$B_1',\dots,B_{n_2}'$ along $D_k$, where each bubble $B_j'$ is isomorphic to a $\PP^1$-bundle over $D_k$.
Each $\PP^1$-bundle $B_j \rightarrow D_{-k}$
(resp.\ $B_j' \rightarrow D_k$) admits two 
natural sections $D_{-k,0}^{(j)}$, 
$D_{-k,\infty}^{(j)}$ (resp.\ 
$D_{k,0}^{(j-1)}$, 
$D_{k,\infty}^{(j)}$). The bubbles $B_{j-1}$ and 
$B_j$ (resp.\ $B_{j-1}'$ and $B_j'$) are 
transversally glued together along $D_{-k,\infty}^{(j-1)} \simeq D_{-k,0}^{(j)}$ 
(resp.\  $D_{k,\infty}^{(j-1)} \simeq D_{k,0}^{(j)}$), and the tangency conditions at the relative markings are imposed along the divisors 
$D_{-k,\infty}^{(n_1)}$ and $D_{k,\infty}^{(n_2)}$ (see Figure \ref{Fig:bubbles}). 

A relative stable map 
$f \colon C \rightarrow \F_k^{\rho\sigma}[n_1,n_2]$ is also required to be \emph{pre-deformable} (\cite{MR1882667}), that is, 
\begin{itemize}
\item[(i)] $f(C)$ does not contain any of the divisors $D_{-k,\infty}^{(j-1)} \simeq D_{-k,0}^{(j)}$ and $D_{k,\infty}^{(j-1)} \simeq D_{k,0}^{(j)}$.
\item[(ii)]If there exists a point $x \in C$ such that $f(C) \in D_{-k,\infty}^{(j-1)} \simeq D_{-k,0}^{(j)}$, then $x$ is a node of $C$, where two irreducible components 
$C_{j-1}$ and $C_j$ of $C$ meet. Moreover, we have
$f(C_{j-1}) \subset B_{j-1}$, 
$f(C_j)\subset B_j$, and the contact order of 
$f(C_{j-1})$ along $D_{-k,\infty}^{(j-1)} \simeq D_{-k,0}^{(j)}$ at the point $x$ is equal to the contact order of $f(C_j)$ along $D_{-k,\infty}^{(j-1)} \simeq D_{-k,0}^{(j)}$ at the point $x$.
\item[(iii)]Same as (ii) with $D_{-k}$ replaced by $D_k$.
\end{itemize}

There is a natural morphism 
$\pr \colon \F_k^{\rho \sigma}[n_1,n_2]
\rightarrow \F_k^{\rho \sigma}$ that contracts the two chains of bubbles onto $D_{-k}$ and $D_k$
respectively.
We denote by 
$\tilde{p} \colon \F_k^{\rho\sigma} 
\rightarrow \PP^1$ the composition of the blow-up 
$\pi \colon \F_k^{\rho\sigma} \rightarrow \F_k$ with the 
$\PP^1$-fibration $p\colon\F_k \rightarrow \PP^1$.
Finally, we denote by $\widetilde{\pr} \colon 
\F_k^{\rho \sigma}[n_1,n_2] \rightarrow \PP^1$
the composition of $\pr \colon \F_k^{\rho \sigma}[n_1,n_2]
\rightarrow \F_k^{\rho \sigma}$
with $\tilde{p} \colon \F_k^{\rho\sigma} 
\rightarrow \PP^1$.

\begin{lem} \label{key_lemma}
Let 
\begin{equation} f \colon (C,x_1,y_1,\dots,y_{\ell(\mu)},z_1,\dots,z_{\ell(\nu)})  \rightarrow \F_k^{\rho \sigma}[n_1,n_2]
\end{equation}
be a relative stable map 
defining a point of 
\begin{equation} 
\overline{M}_{g,1}(\F_k^{\rho\sigma}/D_k\cup D_{-k},\beta_d^{\rho\sigma},\mu,\nu) \,.
\end{equation}
Assume that the points  
$\widetilde{\pr}(f(x_1))$, $\widetilde{\pr}(f(y_j))$, $\widetilde{\pr}(f(z_l))$,
$\tilde{p}(E_m)$, 
$\tilde{p}(F_n)$ are all distinct on $\PP^1$.
Then the curve $(\pr \circ f)(C)$ in $\F_k^{\rho \sigma}$ does not contain 
$D_{-k}$ or $D_{k}$. In other words, the components of $C$ mapped to bubble 
components of $\F_k^{\rho \sigma}[n_1,n_2]$ are mapped onto $\PP^1$-fibers of the bubbles.
\end{lem}

\begin{proof}
By \eqref{eq_intersection_numbers_2}, we have $\beta_d^{\rho \sigma} \cdot F =1$ and so 
$(\pr \circ f)(C)$ contains at most one copy of $D_{-k}$ or $D_k$. 
Assume by contradiction that $(\pr \circ f)(C)$ contains one copy of $D_{-k}$, that is, 
that there exists a bubble $B_j$ and an irreducible component of $C$ mapped to $B_j$ and whose image is not contained in a $\PP^1$-fiber of $B_j$. 
Then, denoting by $C_0$ the union of components of $C$ mapped to $\F_k^{\rho\sigma}$ by $f$,  $f(C_0)$ is contained in a union of fibers of 
$\tilde{p} \colon \F_k^{\rho \sigma} \rightarrow \PP^1$. As we are assuming that 
$\widetilde{\pr}(f(x_1))$ is distinct from the points 
$\tilde{p}(E_m)$ and 
$\tilde{p}(F_n)$, the image by $f$ of the connected component $C_{0,x_1}$ of $C_0$
containing the interior marked point $x_1$ is a $\PP^1$-fiber of $\tilde{p} \colon \F_k^{\rho \sigma} \rightarrow \PP^1$
and so intersects $D_{k}$ (resp.\ $D_{-k}$) at a point $\alpha$ such that $\widetilde{\pr}(\alpha)
=\widetilde{\pr}(f(x_1))$.

The pre-deformability condition implies that there exists an 
irreducible component $C_1$ of $C$ mapping non-trivially to the bubble $B_1'$ and intersecting 
$D_{k,0}$ at $\alpha$.
As we are assuming that $f(C)$ does not contain a copy of $D_k$, $f(C_1)$ is the $\PP^1$-fiber of 
$B_1$ containing $\alpha$. 
Iterating the argument, we obtain that there exists an irreducible component $C_{n_2}$ of $C$ 
such that $f(C_{n_2})$ is a $\PP^1$-fiber 
of the last bubble $B_{n_2}'$ and 
$\widetilde{\pr}(f(C_{n_2}))=\widetilde{\pr}(f(x_1))$. In particular, there exists a point 
$\beta \in C_{n_2}$ such that $f(\beta) \in D_{k,\infty}^{(n_2)}$. But, by definition of relative stable maps, the only points of $C$ mapped to $D_{k,\infty}^{(n_2)}$ are the relative markings 
 $z_j$. By our assumption, we have 
 $\widetilde{\pr}(f(x_1)) \neq \widetilde{\pr}(f(z_j))$ 
 and so $\beta \neq z_j$ for every $j$, and we obtain a contradiction.

The argument when
$(\pr \circ f)(C)$ contains one copy of $D_{k}$ 
is identical after exchanging the roles of 
$D_k$ and $D_{-k}$.
\end{proof}

\subsection{Calculation of the invariants $N_{g,\rel}^{\mu\nu\rho\sigma}$}
\label{statement}

In this section, we prove Theorem \ref{key_thm} computing the invariants 
$N_{g,\rel}^{\mu\nu\rho\sigma}$. The key geometric argument is contained in the proof of Lemma \ref{lem_deg_key} where we use a degeneration argument to trade 
tangency conditions for blowups. This trick to exchange tangency conditions and blow-ups has been used since the early days of Gromov-Witten theory, see for example \cite{gathmann1996counting}. For another examples of application of this technique closely related to the present paper, we refer to \cite{MR2667135, bousseau2018quantum_tropical, bousseau2018example}. 

We start by introducing some notations about partitions that will be useful
to formulate the proof by induction of Theorem \ref{key_thm}.
If $\mu$ is a partition, we denote $\max(\mu)$ the greatest value attained by a part of $\mu$, and $\Nmax(\mu)$ the number of parts of $\mu$ attaining this maximum value. 
If $(\mu, \nu)$ is a pair of partitions, we denote $\max(\mu, \nu)$
for $\max(\max(\mu),\max(\nu))$, that is, the greatest value 
attained by a part of $\mu$ or a part of $\nu$, and 
$\Nmax(\mu, \nu)$ the number of parts of 
$\mu$ and $\nu$ attaining this maximum value.

If $\tau$ is a partition of $\max(\mu)$, we denote $\hat{\mu} \cup \tau$ the partition
of $\tau$ whose set of parts is the union of the set of parts of $\hat{\mu}$
and of the set of parts of $\tau$.
If $\tau$ is not the trivial $1$-part partition of 
$\max(\mu)$, we have
\begin{equation} (\max(\mu \cup \tau, \nu), \Nmax(\mu \cup \tau, \nu)) < (\max(\mu,\nu), \Nmax(\mu,\nu))\,.\end{equation}
If $\tau$ is the trivial $1$-part partition of $\max(\mu)$, we have
$\hat{\mu} \cup \tau = \mu$. 

Let $\mu$, $\nu$, $\rho$, $\sigma$ be four partitions.
Let $\hat{\rho}$ be the partition obtained from $\rho$ by adding one part equal to $\max(\mu)$. We denote $\hat{\mu}$ the partition of $\mu$ obtained 
from $\mu$ by removing one part equal to $\max(\mu)$.

\begin{lem} \label{lem_deg_key}
\begin{equation}\label{eq_deg_key}
\sum_{g \geq 0} N_{g,\rel}^{\hat{\mu} \nu \hat{\rho} \sigma}
u^{2g-2+\ell(\mu)+\ell(\nu)} \end{equation}
\begin{equation*}
= \sum_{\tau \vdash \max(\mu)}
\left( \sum_{g \geq 0} 
N_{g, \rel}^{(\hat{\mu}\cup \tau) \nu \rho \sigma}
u^{2g-2+\ell(\hat{\mu} \cup \tau)+\ell(\nu)}
\right)
\prod_{\ell \geq 1}
\frac{\ell^{m_\ell(\tau)}}{m_\ell(\tau) !} 
\left(\frac{(-1)^{\ell -1}}{\ell}
\frac{1}{2 \sin \left( \frac{\ell u}{2}\right)}
\right)^{m_\ell(\tau)} \,,
\end{equation*}
where we sum over the partitions $\tau=(\tau_j)_{1\leq j\leq \ell(\tau)}$ of 
$\max(\mu)$ and $m_\ell(\tau)$ is the number of parts of $\tau$ equal to
$\ell$.
\end{lem}

\begin{proof}
The proof is an application of the degeneration formula in Gromov-Witten theory to a specific degeneration of  
$\F_{k}^{\hat{\rho} \sigma}$.

Let $X$ be the degeneration of $\F_k^{\rho \sigma}$
to the normal cone of $D_{-k}$, that is, the blow-up of 
$D_{-k} \times \{0\}$ in $\F_k^{\rho \sigma} \times \A^1$. 
Let $\pi \colon  X \rightarrow \A^1$ be the natural projection.
The special fiber $\pi^{-1}(0)$ has two 
irreducible components 
$\F_{k}^{\rho \sigma}$ and $\PP_{-k}$.
Here $\PP_{-k}$ is a $\PP^1$-bundle over $D_{-k}$, with two natural sections $D_{-k,0}$ and 
$D_{-k,\infty}$. In $\pi^{-1}(0)$,
the divisor $D_{-k}$ of 
$\F_k^{\rho \sigma}$
is transversally glued with the divisor 
$D_{-k,0}$ of $\PP_{-k}$.
Let $s$ be a section of $\pi$ such that 
for every $t \neq 0$, 
$s(t) \in D_{-k}$, away from
$D_{-k} \cap E_j$
for all $1 \leq j \leq \ell(\rho)$, and such that $s(0) \in 
(D_{-k})_\infty$.
We blow-up the image of $s$ in $X$
to obtain a new family 
$\tilde{\pi} \colon \tilde{X}
\rightarrow \A^1$.
For $t \neq 0$, we identify 
$\tilde{\pi}^{-1}(t)$ with 
$\F_{k}^{\hat{\rho} \sigma}$.
The special fiber 
$\tilde{\pi}^{-1}(0)$
has two irreducible components:
$\F_k^{\rho \sigma}$ and 
$\tilde{\PP}_{-k}$ glued along the divisor $D_{-k}$, where 
$\tilde{\PP}_{-k}$
is the blow-up of 
$\PP_{-k}$ at the point $s(0)$. We denote by $C\simeq \PP^1 \subset
\tilde{\PP}_{-k}$ the $(-1)$-curve which is the strict transform of the 
$\PP^1$-fiber of $\PP_{-k} \rightarrow D_{-k}$ passing through 
$s(0)$.

We would like to compute the relative Gromov-Witten invariant
$N_{g,\rel}^{\hat{\mu} \nu \hat{\rho} \sigma}$ of $\F_{k}^{\hat{\rho} \sigma}$
using the degeneration $\tilde{\pi} \colon \tilde{X} \rightarrow \A^1$.
A priori, the degeneration formula of \cite{MR1938113} cannot be used to study the degeneration of a relative problem. But in the present situation, 
Lemma \ref{key_lemma} guarantees that the various 
relative conditions along $D_{-k}$ and $D_{k}$ never interact in a non-trivial way. 
It follows that the degeneration formula of \cite{MR1938113} can actually be applied to this case. 
The degeneration formula takes the form 
\begin{equation}\label{eq_deg} N_{g,\rel}^{\hat{\mu}} = \sum_{\Gamma} \frac{\prod_E w_E}{|\Aut(\Gamma)|} \prod_V N_V \,,
\end{equation}
where the sum is over decorated weighted bipartite graphs $\Gamma$
describing dual graphs of curves in the special fiber $\tilde{\pi}^{-1}(0)= \F_k^{\rho\sigma} \cup \tilde{\PP}_{-k}$:
\begin{itemize}
    \item[(i)] Every vertex $V$ of $\Gamma$ is either of type $\F_k^{\rho\sigma}$ or of type $\tilde{\PP}_{-k}$, corresponding to a curve component mapping either to 
    $\F_k^{\rho\sigma}$ or to $\tilde{\PP}_{-k}$. Every vertex $V$ is also decorated by a genus $g_V$ and a curve class $\beta_V$.
    \item[(ii)] Every edge $E$ of $\Gamma$ connects a vertex of type $\F_k^{\rho\sigma}$ with a vertex of type $\tilde{\PP}_{-k}$, and has a weight $w_E$.
    \item[(iii)] Half-edges $(V,E)$ of $\Gamma$, that is, pairs of a vertex $V$ and of an incident edge $E$, is decorated by a cohomology class $c_{(V,E)} \in\{1,p\}$, where $1 \in H^0(D_{-k},\Z)$ and $p\in H^2(D_{-k},\Z)$ is Poincaré dual to a point. For every edge $E$, there is exactly one vertex $V$
    incident to $E$ such that $c_{(V,E)}=1$, and one vertex $V'$ incident to $E$ such that $c_{(V',E)}=p$. These cohomology classes come from the insertion of the class 
    $1 \times p+p\time 1$ of the diagonal $D_{-k} \simeq \PP^1 \subset D_{-k} \times D_{-k} \simeq \PP^1 \times \PP^1$ in the degeneration formula  \cite{MR1938113}. 
\end{itemize}
Moreover, the contribution $N_V$ of each vertex $V$ is a relative Gromov-Witten invariant defined by the type of $V$ and the weights and cohomological decorations of the edges incident to $V$.

As the definition \ref{eq_gw_blowup} of $N_{g,\rel}^{\hat{\mu}}$ includes the insertion of the class $(-1)^g \lambda_g$, it follows from Lemma \ref{lem_gluing1}
that only graphs $\Gamma$ of genus $0$ can have a non-zero contribution and that the definition of $N_V$ includes the insertion of the class $(-1)^{g_V} \lambda_{g_V}$. Let $V$ be a vertex of type $\tilde{\PP}_{-k}$ of such graph $\Gamma$. Then, the class $\beta_V$ is necessarily a multiple $k_V[C]$ of the class of the $(-1)$-curve $C \subset \tilde{\PP}_{-k}$. Let $\ell_V$ be the number of edges incident to $V$ and $\mu_V$ the partition whose parts are the weights 
$w_E$ of edges incident to $V$. As the curve $C$ is rigid in 
$\tilde{\PP}_{-k}$, every stable map of class $k_V[C]$ factors through 
$C$. Moreover, $N_V$ is non-vanishing only if $c_{(V,E)}=1$ for every $E$ and then
\begin{equation} \label{eq_N_V_key}
    N_V = \int_{[M_V]^{\virt}} (-1)^{g_V} \lambda_{g_V} e(R^1\pi_V f_V^{*} \cO(-1)) \,,
\end{equation}
where $M_V$ is the moduli stack of genus $g_V$ degree $k_V$ relative stable maps to $C \simeq \PP^1$ relative to a point $\infty \in \PP^1$ and with contact orders $\mu_V$. Moreover,
$\pi_V \colon C_V \rightarrow M_V$ is the universal curve, $f_V \colon C_V \rightarrow \PP^1$ is the universal map and the Euler class insertion 
$e(R^1\pi_V f_V^{*} \cO(-1))$ is the difference between the perfect obstruction theories for curves mapping to the surface $\tilde{\PP}_{-k}$ and for curves mapping to the curve $C \simeq \PP^1$ with normal bundle $\cO(-1)$ in 
$\tilde{\PP}_{-k}$. The virtual dimension of $M_V$ is $2g_V-2+k_V+\ell_V$, whereas the integrand in \eqref{eq_N_V_key} has complex degree $2g-1+k_V$, and so $N_V=0$ unless $\ell_V=1$. If $\ell_V=1$, then $N_V$ is the coefficient of 
$u^{2g-1}$ in 
\begin{equation} \frac{(-1)^{k_V -1}}{k_V}
\frac{1}{2 \sin \left( \frac{k_V u}{2}\right)}\end{equation} 
by \cite[Theorem 5.1]{MR2115262}.

Therefore, it is enough in \eqref{eq_deg} to sum over genus $0$ graphs $\Gamma$
such that $\ell_V=1$ for every $V$ of type $\tilde{\PP}_{-k}$.
For such graph $\Gamma$, as $\Gamma$ is connected, there exists a unique vertex $V_0$ of type 
$\F_k^{\rho\sigma}$ and 
$N_{V_0}= N_{g_{V_0}, \rel}^{(\hat{\mu}\cup \tau) \nu \rho \sigma}$
where $\tau$ is the partition of $\max(\mu)$ whose parts are $k_V$
for $V$ of type $\tilde{\PP}_{-k}$. Hence, \eqref{eq_deg} reduces to 
\eqref{eq_deg_key}.
\end{proof}

\begin{thm} \label{key_thm}
For every partitions $\mu$, $\nu$, $\rho$,
$\sigma$ as in \eqref{eq_sum_partition}, the relative Gromov-Witten invariants 
$N_{g,\rel}^{\mu\nu\rho\sigma}$
of $\F_k^{\rho\sigma}/D_k \cup D_{-k}$
defined in \eqref{eq_gw_blowup} are as follows.
\begin{itemize}
    \item[(i)] If all the parts of $\rho$ and $\sigma$ are equal to $1$, then
    \begin{equation} \label{eq_thm_key} \sum_{g \geq 0} N_{g, \rel}^{\mu \nu \rho \sigma} u^{2g-2+\ell(\mu)+\ell(\nu)}
= u^{-2}
\prod_{j=1}^{\ell(\mu)}\frac{1}{\mu_j} 2\sin\left(
\frac{\mu_j u}{2}\right)
\prod_{l=1}^{\ell(\nu)}
\frac{1}{\nu_l}2\sin\left(
\frac{\nu_l u}{2}\right)\,,
\end{equation}
that is, using the notation \eqref{eq_q_integer} for $q$-integers,
\begin{equation}\sum_{g \geq 0} N_{g, \rel}^{\mu \nu \rho \sigma} u^{2g+\ell(\mu)+\ell(\nu)}
=u^{-2}\left( 
(-i)(q^{\frac{1}{2}} -q^{-\frac{1}{2}})\right)^{\ell(\mu)+\ell(\nu)} 
\prod_{j=1}^{\ell(\mu)}\frac{[\mu_j]_q}{\mu_j} 
\prod_{l=1}^{\ell(\nu)}
\frac{[\nu_l]_q}{\nu_l}
\,, 
\end{equation}
where $q=e^{iu}$ in the right-hand side.
   \item[(ii)] If the parts of $\rho$ and $\sigma$ are not not all equal to
   $1$, then $N_{g, \rel}^{\mu \nu \rho \sigma}=0$
for all $g \geq 0$.
\end{itemize}
\end{thm}

\begin{proof}
We prove
Theorem \ref{key_thm} by induction on the pair
$(\max(\mu,\nu), \Nmax(\mu,\nu))$, where we use the lexicographic order
for pairs of nonnegative integers:
$(x,y) \leq (x',y')$ if $x \leq x'$, or $x=x'$ and 
$y \leq y'$. Concretely, at every step, we lower the 
number of times that the maximum value for parts of $\mu$ and $\nu$
is attained, and once this number of times is reduced to one, we reduce this maximum value. 
The base case of the induction is  $\mu= \nu
= \emptyset$ and the result is then given by Lemma 
\ref{prop_base_case}.

The remainder of the proof is the inductions step. Let $\mu$, $\nu$, $\rho$, $\sigma$ be four partitions. We assume that Theorem \ref{key_thm} holds for 
every partitions $\mu'$, $\nu'$, $\rho'$, $\sigma'$ with 
$(\max(\mu',\nu'), \Nmax(\mu',\nu')) < (\max(\mu,\nu), \Nmax(\mu,\nu))$. We want to show that 
Theorem \ref{key_thm} holds for $\mu$, $\nu$, $\rho$, $\sigma$.
Up to exchanging the roles of $\mu$ and $\nu$, we can assume that 
$\max(\mu,\nu)=\max(\mu)$, that is, $\max(\mu,\nu)$ is attained by a part of 
$\mu$. 

By Lemma \ref{lem_deg_key}, 
$N_{g,\rel}^{\hat{\mu} \nu \hat{\rho}}$
is expressed by \eqref{eq_deg_key}
in terms of the invariants
$
N_{g, \rel}^{(\hat{\mu}\cup \tau) \nu \rho \sigma}$
where $\tau=(\tau_j)_{1\leq j\leq \ell(\tau)}$ is a partition 
of $\max(\mu)$.
If $\tau$ is not the trivial $1$-part partition of 
$\max(\mu)$, we have
\begin{equation} (\max(\mu \cup \tau, \nu), \Nmax(\mu \cup \tau, \nu)) < (\max(\mu,\nu), \Nmax(\mu,\nu))\,,\end{equation}
and so by the induction hypothesis, we can apply Theorem \ref{key_thm} to
compute the invariants
$N_{g,\rel}^{(\hat{\mu}\cup \tau) \nu \rho \sigma}$.
Similarly, we have \begin{equation}(\max(\hat{\mu},\nu), \Nmax(\hat{\mu},\nu))<(\max(\mu,\nu),\Nmax(\mu,\nu))\,,\end{equation}
and so by the induction hypothesis, we can apply Theorem \ref{key_thm} 
to compute
$N_{g,\rel}^{\hat{\mu} \nu \hat{\rho} \sigma}$. 
If $\tau$ is the trivial $1$-part partition of $\max(\mu)$, we have 
$\hat{\mu} \cup \tau =\mu$ and so 
$N_{g,\rel}^{(\hat{\mu}\cup \tau) \nu \rho \sigma}
= N_{g,\rel}^{\mu \nu \rho \sigma}$.
Hence, it remains to prove that \eqref{eq_thm_key} is indeed implied
by \eqref{eq_deg_key} and the induction hypothesis.

If a part of $\rho$ or $\sigma$ is not equal to $1$,
then, by the induction hypothesis, we have 
$N_{g,\rel}^{\hat{\mu} \nu \hat{\rho} \sigma}=0$ for every $g \geq 0$, and 
$N_{g,\rel}^{(\hat{\mu}\cup \tau) \nu \rho \sigma}=0$ for every $g \geq 0$ and for every $\tau$ non-trivial partition of $\max(\mu)$. 
It follows from \eqref{eq_deg_key} that $N_{g,\rel}^{\mu \nu \rho \sigma}=0$ for every
$g \geq 0$.

So we can assume that all parts of $\rho$ and $\sigma$ are equal to $1$.
If $\max(\mu) \neq 1$, then a part of $\hat{\rho}$ is strictly greater than $1$, 
and so 
$N_{g,\rel}^{\hat{\mu} \nu \hat{\rho} \sigma}=0$.
By induction hypothesis, we have 
\begin{equation}\sum_{g \geq 0}N_{g, \rel}^{(\hat{\mu}\cup \tau) \nu \rho \sigma}
u^{2g-2+\ell(\hat{\mu}+m)+\ell(\nu)}=\end{equation}
\begin{equation*}u^{-2}\prod_{\ell \geq 1}\left(\frac{1}{\ell} 2\sin\left(
\frac{\ell u}{2}\right)\right)^{m_\ell(\hat{\mu})}
\left(\frac{1}{\ell}2\sin\left(
\frac{\ell u}{2}\right) \right)^{m_\ell(\nu)}
\left(\frac{1}{\ell}2\sin\left(
\frac{\ell u}{2}\right) \right)^{m_\ell(\tau)}\end{equation*}
for every non-trivial partition $\tau$ of $\max(\mu)$, where
$m_\ell(\mu)$ is the number of parts equal to $\ell$ in a partition $\mu$.
In order to prove \eqref{eq_thm_key},
it is enough by \eqref{eq_deg_key} to prove that
\begin{equation}\label{eq_identity} \sum_{\tau \vdash \max(\mu)} \prod_{\ell \geq 1} 
\frac{1}{m_\ell(\tau) !} \left( \frac{(-1)^{\ell -1}}{\ell} \right)^{m_\ell(\tau)}=0 \,.
\end{equation}
But the left-hand side of \eqref{eq_identity} is the coefficient of
$x^{\max(\mu)}$ in the power series expansion of the identity 
$\exp (\log(1+x))=1+x$, 
and so vanishes as we are assuming $\max(\mu)>1$.

If $\max(\mu)=1$, then all the parts of $\mu$ and $\nu$ are equal to $1$ and
\eqref{eq_deg_key} reduces to
\begin{equation} \sum_{g \geq 0} N_{g, \rel}^{\hat{\mu} \nu \hat{\rho} \sigma}
u^{2g-2+\ell(\mu)+\ell(\nu)}
=u^{-2}
\left(
\sum_{g \geq 0} N_{g, \rel}^{\mu \nu \rho \sigma} u^{2g+\ell(\mu)+\ell(\nu)}
\right)\frac{1}{2 \sin \left(\frac{u}{2}
\right)}\,.
\end{equation}
By induction hypothesis, we have  
\begin{equation} \sum_{g \geq 0}N_{g, \rel}^{\hat{\mu} \nu \hat{\rho} \sigma}u^{2g-2+\ell(\mu)+\ell(\nu)}
=u^{-2}
\left( 2\sin\left(
\frac{u}{2}\right)\right)^{m_1(\hat{\mu})}
\left(2\sin\left(
\frac{ u}{2}\right) \right)^{m_1(\nu)}\,,
\end{equation}
and so, using that $m_1(\mu) =m_1(\hat{\mu})+1$, we obtain
\begin{equation}\sum_{g \geq 0}N_{g, \rel}^{\mu \nu \rho \sigma}u^{2g-2+\ell(\mu)+\ell(\nu)}
=u^{-2}
\left( 2\sin\left(
\frac{u}{2}\right)\right)^{m_1(\mu)}
\left(2\sin\left(
\frac{ u}{2}\right) \right)^{m_1(\nu)}\,.
\end{equation}
This finishes the proof of Theorem \ref{key_thm}.
\end{proof}

\section{Main result: floor diagrams from degeneration} \label{section_main_result}

In \S \ref{preliminaries}, we prove two vanishing results in relative Gromov-Witten theory of Hirzebruch surfaces. 
In \S \ref{section_relative_gw}, we define the relative 
Gromov-Witten invariants $N_{g,\rel}^{\Delta,n}$ of $h$-transverse toric sufaces.
In \S \ref{section_degeneration}, for $\Delta=\Delta_{h,d}^{\F_k}$, we apply the degeneration formula in Gromov-Witten theory to express the invariants 
$N_{g,\rel}^{\Delta,n}$ in terms of the invariants 
$N_{g,\rel}^{\mu\nu\emptyset\emptyset}$ defined in \S \ref{section_computation}.
The unrefined, that is, 
$g=g_{\Delta,n}$, version of this degeneration argument can be found for example in the proof of Theorem 4.9 of \cite{cavalieri2017counting}, or, in the Fock space language, in \S 2.5 of \cite{cooper2017fock}. We adapt this degeneration argument to the refined, that is,
$g \geq g_{\Delta,n}$, case using the vanishing results proved in 
\S \ref{preliminaries}. Finally, we prove in \S \ref{section_main_result} our main result, Theorem \ref{main_thm_precise}, computing the invariants 
$N_{g,\rel}^{\Delta,n}$ for $\Delta=\Delta_d^{\PP^2}$ and 
$\Delta=\Delta_{h,d}^{\F_k}$ in terms of $q$-refined counts of floor diagrams.
The proof relies on the explicit calculation of the invariants 
$N_{g,\rel}^{\mu\nu\rho\sigma}$ given by Theorem \ref{key_thm}.

\subsection{Dimension constraints}
\label{preliminaries}
In this section, we prove two vanishing results for relative Gromov-Witten invariants of Hirzebruch surfaces. 
We use the notations introduced in 
\S \ref{section_relative_gw_general}
for relative Gromov-Witten invariants. 

\begin{lem}
\label{lem_dim_1}
Let $h, d\in \Z_{\geq 0}$, and $\beta =hD_k+dF \in H_2(\F_k,\Z)\setminus \{0\}$.
Let
$\mu=(\mu_j)_{1\leq j\leq \ell(\mu)}$, $\nu
=(\nu_j)_{1\leq j\leq \ell(\nu)}$ be two partitions of $\beta \cdot D_{-k}=d$ and 
$\beta \cdot D_k=d+kh$ respectively. 
Let $\delta^1=(\delta^1_j)_{1\leq j\leq \ell(\mu)}$ be 
$\ell(\mu)$ elements of $H^{*}(D_{-k},\Z)$ and 
$\delta^2=(\delta_j^2)_{1\leq j \ell(\nu)}$ be
$\ell(\nu)$ elements of $H^{*}(D_{k},\Z)$.
Assume that among these  
$\ell(\mu)+\ell(\nu)$ cohomology classes, 
$s$ of them are equal to $1$ and $\ell(\mu)+\ell(\nu)-s$ of them are Poincaré dual to a point. Then, for every $g \in \Z_{\geq 0}$ and $0\leq j \leq g$, we have 
\begin{equation}\label{eq_lem_pre_0} \langle \mu,\delta^2|\, (-1)^j \lambda_j|\,\nu,\delta^1 \rangle_{g,\beta}^{\F_k/D_k \cup D_{-k}} =0 \,,   
\end{equation}
unless the following conditions hold: 
\begin{itemize}
\item[(i)]$h=0$, that is $\beta=dF$. In such case, we have $\beta \cdot D_{-k}=\beta \cdot D_k = d$.
\item[(ii)]
 $\mu$ and $\nu$ are both the trivial 1-part partition $(d)$ of $d$, that is, $\ell(\mu)=\ell(\nu)=1$.
\item[(ii)] $s=1$, that is among the 
$\ell(\mu)+\ell(\nu)=2$ cohomology classes 
$\delta^1_1$ and $\delta^2_1$, exactly one of them is equal to $1$ and the other is Poincaré dual to a point. 
\item[(iv)] $g=j=0$.
\end{itemize}
If these conditions are satisfied, then 
\begin{equation}\label{eq_lem_pre} \langle \mu,\delta^2|\,\nu,\delta^1 \rangle_{0,\beta}^{\F_k/D_k \cup D_{-k}} =\frac{1}{d} \,.   
\end{equation}
\end{lem}

\begin{proof}
By \eqref{eq_vdim}, the virtual dimension of the moduli stack of relative stable maps used to define 
\begin{equation}\label{eq_gw_1} \langle \mu,\delta^2|\, (-1)^j \lambda_j|\,\nu,\delta^1 \rangle_{g,\beta}^{\F_k/D_k \cup D_{-k}}    
\end{equation}
is 
\begin{align}\label{eq_vdim_1}
&g-1+\beta \cdot (D_k+D_{-k}+2F)-(\beta \cdot D_{-k} -\ell(\mu))-(\beta \cdot D_k -\ell(\nu)) \\
&=g-1+2h+\ell(\mu)+\ell(\nu)\,.\nonumber
\end{align}
On the other hand, we integrate 
in \ref{eq_gw_1}
over the virtual dimension class a cohomology class of complex degree 
\begin{equation} \label{eq_insert_1}
j+\ell(\mu)+\ell(\nu)-s\,.
\end{equation}
\eqref{eq_gw_1} is $0$ unless 
\eqref{eq_vdim_1} = \eqref{eq_insert_1}, that is 
$2h+s+(g-j)=1$. As $h$, $s$ and $(g-j)$ are nonnegative integers, this is only possible if either $(h,s,j)=(0,0,g-1)$ or 
$(h,s,j)=(0,1,g)$. 

If $(h,s,j)=(0,0,g-1)$, then $\beta=d F$ and we fix the position of the $\ell(\mu)+\ell(\nu)$ contact points with 
$D_k \cup D_{-k}$. Note that $\ell(\mu)$, $\ell(\nu) \geq 1$ because the assumption $\beta \neq 0$ implies that $d>0$.
As $\ell(\mu)+\ell(\nu)\geq 2$, we can choose the position of two contact points in two different fibers of $p \colon \F_k \rightarrow \PP^1$. But a curve of class $\beta=dF$ is contained in a $\PP^1$-fiber of $p$, so the set of curves matching the constraints is empty and so \eqref{eq_gw_1} is zero.

If $(h,s,j)=(0,1,g)$, then $\beta=d F$ and we fix the position of $\ell(\mu)+\ell(\nu)-1$
of the $\ell(\mu)+\ell(\nu)$ contact points with 
$D_k \cup D_{-k}$.
If $\ell(\mu)+\ell(\nu)-1\geq 2$, we can choose the position of two contact points in two different fibers of $p$ and as a curve of class $\beta=dF$ is contained in a $\PP^1$-fiber of $p$, the set of curves matching the constraints is empty. Hence, \eqref{eq_gw_1} is still zero unless $\ell(\mu)=\ell(\nu)=1$. Finally, under the assumptions (i)-(iv), \eqref{eq_lem_pre} follows from 
Lemma \ref{lem_vanishing_2}(ii).
\end{proof}

\begin{lem} \label{lem_dim_2}
Let $h,d \in \Z_{\geq 0}$, and $\beta =hD_k+dF \in H_2(\F_k,\Z)\setminus \{0\}$.
Let
$\mu=(\mu_j)_{1\leq j\leq \ell(\mu)}$, $\nu
=(\nu_j)_{1\leq j\leq \ell(\nu)}$ be two partitions of $\beta \cdot D_{-k}=d$ and 
$\beta \cdot D_k=d+kh$ respectively. 
Let $\delta^1=(\delta^1_j)_{1\leq j\leq \ell(\mu)}$ be 
$\ell(\mu)$ elements of $H^{*}(D_{-k},\Z)$ and 
$\delta^2=(\delta_j^2)_{1\leq j \ell(\nu)}$ be
$\ell(\nu)$ elements of $H^{*}(D_{k},\Z)$.
Assume that among these  
$\ell(\mu)+\ell(\nu)$ cohomology classes, 
$s$ of them are equal to $1$ and $\ell(\mu)+\ell(\nu)-s$ of them are Poincaré dual to a point. Then, for every $g \in \Z_{\geq 0}$
and $0\leq j\leq g$, denoting by $\alpha_1\in H^4(X,Z)$ the class 
Poincaré dual to a point, we have 
\begin{equation} \langle \mu,\delta^2|\, (-1)^j \lambda_j; \alpha_1|\,\nu,\delta^1 \rangle_{g,\beta}^{\F_k/D_k \cup D_{-k}} =0 \,,   
\end{equation}
unless we are in one of the following two situations.
\begin{itemize}
\item[(i)]
$h=0$, that is $\beta=dF$, $\mu$ and $\nu$ are both the trivial 1-part partition $(d)$ of $d$, that is, $\ell(\mu)=\ell(\nu)=1$, $s=1$, that is both of the
$\ell(\mu)+\ell(\nu)=2$ cohomology classes 
$\delta^1_1$ and $\delta^2_1$ are equal to $1$, and $g=j=0$. In this case, we have 
\begin{equation} \label{eq_lem_pre_2}
\langle \mu,\delta^2|\, \alpha_1|\,\nu,\delta^1 \rangle_{0,\beta}^{\F_k/D_k \cup D_{-k}} =1 \,.
\end{equation}
\item[(ii)] $h=1$, that is 
$\beta=D_k+dF$, $s=0$, that is all of the 
$\ell(\mu)+\ell(\nu)$ cohomology classes 
$\delta^1_j$ and $\delta^2_j$ are Poincaré dual to a point, and $j=g$. In this case, we have 
\begin{equation} \label{eq_lem_pre_3}
\langle \mu,\delta^2|\, (-1)^g \lambda_g; \alpha_1|\,\nu,\delta^1 \rangle_{g,\beta}^{\F_k/D_k \cup D_{-k}} =N_{g,\rel}^{\mu\nu\emptyset\emptyset} \,,
\end{equation}
where $N_{g,\rel}^{\mu\nu\emptyset\emptyset}$ is the specialization for $\rho=\sigma=\emptyset$ of the invariants $N_{g,\rel}^{\mu\nu\rho\sigma}$ defined in \eqref{eq_gw_blowup}.
\end{itemize}
\end{lem}

\begin{proof}
By \eqref{eq_vdim}, the virtual dimension of the moduli stack of relative stable maps used to define 
\begin{equation}\label{eq_gw_2} \langle \mu,\delta^2|\, (-1)^g \lambda_g;\alpha_1|\,\nu,\delta^1 \rangle_{g,\beta}^{\F_k/D_k \cup D_{-k}}    
\end{equation}
is 
\begin{align}\label{eq_vdim_2}
&g-1+\beta \cdot (D_k+D_{-k}+2F)+1-(\beta \cdot D_{-k} -\ell(\mu))-(\beta \cdot D_k -\ell(\nu)) \\
&=g+2h+\ell(\mu)+\ell(\nu)\,.\nonumber
\end{align}
On the other hand, we integrate 
in \ref{eq_gw_1}
over the virtual dimension class a cohomology class of complex degree 
\begin{equation} \label{eq_insert_2}
j+2+\ell(\mu)+\ell(\nu)-s\,.
\end{equation}
\eqref{eq_gw_2} is $0$ unless 
\eqref{eq_vdim_2} = \eqref{eq_insert_2}, that is 
$2h+s+(g-j)=2$. As $h$, $s$, and $(g-j)$ are nonnegative integers, there are only four possibilities: $(h,s,j)=(0,0,g-2)$, 
$(h,s,j)=(0,1,g-1)$,
$(h,s,j)=(0,2,g)$ and $(h,s,j)=(1,0,g)$.

If $(h,s,j)=(0,2-r,g-r)$ for some $r \in\{0,1,2\}$, then $\beta=d F$ and we fix the position of $\ell(\mu)+\ell(\nu)+r-2$
of the $\ell(\mu)+\ell(\nu)$ contact points with 
$D_k \cup D_{-k}$.
If $\ell(\mu)+\ell(\nu)+r-2\geq 1$, we can choose the positions of one of the contact point and of the interior marked point in two different fibers of $p \colon \F_k \rightarrow \PP^1$, and so, as 
a curve of class $\beta=dF$ is contained in a $\PP^1$-fiber of $p$, the set of curves matching the constraints is empty. Hence,  \eqref{eq_gw_2} is still zero unless $r=0$ and $\ell(\mu)=\ell(\nu)=1$. Thus, we are in the case (i) and \eqref{eq_lem_pre_2} follows from 
Lemma \ref{lem_vanishing_2}(i).

If $(h,s,j)=(1,0,g)$, then we are in the case (ii) and 
\eqref{eq_lem_pre_3} follows from the definition 
\eqref{eq_gw_blowup} of $N_{g,\rel}^{\mu\nu\rho\sigma}$.
\end{proof}

\subsection{Gromov-Witten invariants of $h$-transverse toric surfaces} \label{section_relative_gw}

Let $\Delta$ be a $h$-transverse balanced collection of vectors in $\Z^2$ as in Definition 
\ref{def_h_transverse}, and $n$ a nonnegative integer.
We assume that the corresponding toric surface $X_\Delta$ given by Definition \ref{def_X_delta} is smooth. We defined in \S \ref{section_h_transverse} a curve class $\beta_{\Delta}$ and two smooth disjoint divisors 
$D_b$ and $D_t$. 
We define the relative Gromov-Witten
invariants $N_{g,\rel}^{\Delta,n}$ of 
$X_\Delta/D_b \cup D_t$ by 
\begin{equation} \label{eq_gw_h}
N_{g,\rel}^{\Delta,n} \coloneqq \langle \mu,\delta^2|(-1)^{g-g_\Delta} \lambda_{g-g_\Delta}; \alpha_1,\dots,\alpha_n|\nu,\delta^1\rangle_{g,\beta_\Delta}^{X_\Delta/D_b\cup D_t} \,,
\end{equation}
where we apply the general definition \eqref{eq_gw} of relative Gromov-Witten invariants to $X=X_\Delta$, $D_1=D_b$, $D_2=D_t$, $\beta=\beta_\Delta$, and where:
\begin{itemize}
\item[(i)] $\mu$ is the partition of $d_t=\beta \cdot D_t$ whose all parts are  equal $1$, that is $\ell(\mu)=d_t$, and $\delta^2=(\delta^2_j)_{1\leq j\leq d_t}$, where $\delta^2_j=1 \in H^0(D_t,\Z)$ for all $j$,
\item[(ii)] $\nu$ is the partition of 
$d_b=\beta \cdot D_b$ whose all all parts are equal to $1$, that is $\ell(\nu)=d_b$, and 
$\delta^1=(\delta^1_j)_{1\leq j\leq d_b}$, where 
$\delta_j^1=1 \in H^0(D_b,\Z)$ for all $j$,
\item[(iii)] the cohomology classes 
$\alpha_1,\dots,\alpha_n$ inserted at the $n$ interior marked points are all equal to the Poincaré dual class of a point in $H^4(X_\Delta,\Z)$
\item[(iv)] the class $\gamma$ in \eqref{eq_gw} on the moduli stack of relative stable maps is taken to be $(-1)^{g-g_{\Delta,n}}\lambda_{g-g_{\Delta,n}}$, where $g_{\Delta,n}=n+1-|\Delta|$
(see Definition \ref{def_floor_diagram}(ii))
and $\lambda_{g-g_{\Delta,n}}$ the lambda class of complex degree $g-g_{\Delta,n}$ as in \eqref{eq_lambda}.
\end{itemize}

In other words,  $N_{g,\rel}^{\Delta,n}$ is a virtual count of genus $g$ class $\beta_\Delta$ curves
in $X_\Delta$, with transversal intersections with the divisors $D_b$ and $D_t$ and passing through 
$n$ given points in $X_\Delta$.

\subsection{The degeneration formula for the invariants $N_{g,\rel}^{\Delta,n}$}
\label{section_degeneration}

In this section, we fix $\Delta=\Delta_{h,d}^{\F_k}$, as in \S \ref{ex_hirzebruch},
so that $X_\Delta=\F_k$ and we state in Lemma \ref{lem_degeneration} a precise version of the degeneration formula \cite{MR1938113} computing the relative Gromov-Witten invariants $N_{g,\rel}^{\Delta,n}$ defined in \eqref{eq_gw_h}.

\begin{figure}
\center{\scalebox{.3}{\input{F2.pspdftex}}}
\caption{The degeneration $\mathcal{X} \rightarrow \A^1$ of $\F_k$
to a chain of $n$ copies of $\F_k$, and the degeneration of the $n$ point conditions.}
\label{Fig:degeneration}
\end{figure}

By successive degeneration of $\F_k$ to the normal cone of the divisor $D_{-k}$, we construct a degeneration $\cX \rightarrow \A^1$ of 
$\F_k$, whose special fiber $\cX_0$ is a chain of $n$ copies $\F_k^{(j)}$ of $\F_k$
$1 \leq j \leq n$. For every 
$1 \leq j \leq n-1$, the divisor 
$D_{-k}^{(j)}$ of $\F_k^{(j)}$ is transversally glued with the divisor $D_k^{(j+1)}$ of $\F_k^{(j+1)}$. 
We denote 
\begin{equation}
D^0 \coloneqq D_{k}^{(0)}\,,\,\,\,\, 
D^j \coloneqq D_{-k}^{(j)}\simeq D_k^{j+1}\,
\text{for} \,\,1 \leq j\leq n-1, \,\,\,\,\text{and}\,\,\, D^{n} \coloneqq D_{-k}^{(n)}\,.
\end{equation} 
Our goal is to state precisely the degeneration formula in relative Gromov-Witten theory \cite{MR1938113} applied to 
$\cX \rightarrow \A^1$
to compute $N_{g,\rel}^{\Delta,n}$, where
we distribute the $n$ point conditions $\alpha_1,\dots,\alpha_n$ appearing in
\eqref{eq_gw_h} by placing one on each of the 
$n$ components $\F_k^{(1)},\dots,\F_k^{(n)}$
(see Figure \ref{Fig:degeneration}). To do that, we need to introduce some combinatorial notations, that might seem heavy but are geometrically completely natural. We first introduce in Definition \ref{def_G} below a set of weighted decorated graphs which will index the terms of the degeneration formula and describe the possible degeneration types of curves in the special fiber 
$\cX_0$. The geometric meaning of each condition is explained in the proof of Lemma \ref{lem_degeneration} 
below.

\begin{defn} \label{def_G}
We denote by $\cG^{k,n}_{h,d,g}$ the set of decorated weighted graphs $\Gamma$ which are as follows.
\begin{itemize}
\item[(i)] $\Gamma$ is a weighted graph as in 
\S \ref{section_floor}, with a set $V(\Gamma)$ of vertices and a set $E(\Gamma)$ of edges,
which are either bounded or unbounded. Every edge $E \in E(\Gamma)$ has a weight $w_E \in \Z_{\geq 1}$, which is equal to $1$ if $E$ is unbounded.
\item[(ii)] Every vertex $V \in V(\Gamma)$ has a genus decoration $g_V \in \Z_{\geq 0}$, and the first Betti number $g_\Gamma$ of $\Gamma$ is such that 
$g_\Gamma + \sum_{V \in V(\Gamma)} g_V=g$.
\item[(iii)] Every vertex $V \in V(\Gamma)$
is decorated by an index $j_V \in \{1,\dots,n\}$ and a class $\beta_V =h_V D_k^{(j_V)}+d_V F^{(j_V)} \in H_2(\F_k^{(j_V)},\Z)$.
\item[(iv)] For every $1 \leq j \leq n$, there is a distinguished vertex, denoted by $V_j$, among the vertices with $j_V=j$.
\item[(v)] Every edge $E \in E(\Gamma)$ is decorated by an index $j_E\in \{0,\dots,n\}$. 
If $E$ is a bounded edge, then $j_E \in \{1,\dots,n-1\}$ and there is a labeling $V$, $V'$ of the two vertices incident to $E$ such that 
$j_V=j_E$  and $j_{V'}=j_E+1$. If $E$ is an unbounded edge, then $j_E\in \{0,n\}$, and 
there are exactly $d+kh$ unbounded edges $E$ with 
$j_E=0$ and $d$ unbounded edges $E$ with $j_E=n$.
For every $V \in V(\Gamma)$, there are exactly 
$d_V$ edges $E$ incident to $V$ with 
$j_E=j_V$ and $d_V+kh_V$ egdes $E$
incident to $V$ with $j_E=j_V-1$.
\item[(vi)] Every half-edge, that is a pair
$(V,E)$ with $V \in V(\Gamma)$ and $E \in E(\Gamma)$ incident to $V$, is decorated by a cohomology class $c_{(V,E)} \in \{1,p_{j_E}\}$,
where $1 \in H^0(D^{j_E},\Z)$ and $p_{j_E} \in H^2(D^{j_E},\Z)$ is Poincaré dual to a point.   
If $E$ is unbounded, with incident vertex $V$, then $c_{(V,E)}=1$. If $E$ is bounded, then for exactly one vertex $V'$ incident to $E$ we have 
$c_{(V',E)}=1$, and for the other vertex $V''$ incident to $E$, we have $c_{(V'',E)}=p_{j_E}$.
\item[(vii)] Every vertex $V \in V(\Gamma)$ is decorated by an index $0 \leq m_V \leq g_V$, and we have 
$\sum_{V \in V(\Gamma)} m_V=g$.
\end{itemize}
\end{defn}

\begin{figure}
\center{\scalebox{.3}{\input{F3.pspdftex}}}
\caption{On the left, a curve in the special fiber of the degeneration. In the middle, the corresponding dual graph $\Gamma$. On the right, the corresponding marked floor diagram $\fD_\Gamma$.}
\label{Fig:graphs}
\end{figure}

\begin{defn}
For every $\Gamma \in \cG^{k,n}_{h,d,g}$ and 
$V \in V(\Gamma)$, let $E_{V,-}$ (resp.\ $E_{V,+}$) be the set of $E \in E(\Gamma)$ incident to $V$ such that $j_E=j_V$ (resp.\ $j_E=j_V-1$). We denote 
\begin{equation} \mu_V \coloneqq (w_E)_{E \in E_{V,-}}\,, \,\,\, \nu_V \coloneqq (w_E)_{E \in E_{V,+}} 
\end{equation}
and
\begin{equation}
\delta^2_V \coloneqq (c_{(V,E)})_{E \in E_{V,-}}
\,, \,\,\,
\delta^1_V \coloneqq (c_{(V,E)})_{E \in E_{V,+}}\,.
\end{equation}
We define a relative Gromov-Witten invariant of 
$\F_k^{(j_V)}$ relatively 
to $D^{j_V-1} \cup D^{j_V}$ by
\begin{equation} \label{eq_N_V}
N_{\Gamma, V} \coloneqq 
\begin{cases}
\langle \mu_V, \delta^2_V| 
(-1)^{m_V} \lambda_{m_V}| \nu_V,\delta^1_V \rangle_{g_V,\beta_V}^{\F_k/D^{j_V-1} \cup D^{j_V}} \,\textit{if} \,\,\, V \neq V_{j_V}\\
\langle \mu_V, \delta^2_V| 
(-1)^{m_V} \lambda_{m_V};\alpha_{j_V}| \nu_V,\delta^1_V \rangle_{g_V,\beta_V}^{\F_k/D^{j_V-1} \cup D^{j_V}} \,\textit{if} \,\,\, V=V_{j_V}\,.
\end{cases}
\end{equation}
\end{defn}

\begin{lem}\label{lem_degeneration}
For $\Delta=\Delta_{h,d}^{\F_k}$,
the relative Gromov--Witten invariants 
$N_{g,\rel}^{\Delta,n}$ of $X_{\Delta}=\F_k$ defined in 
\eqref{eq_gw_h} are given by:
\begin{equation} \label{eq_degeneration}
N_{g,\rel}^{\Delta_{h,d}^{\F_k},n} 
= \sum_{\Gamma \in \cG_{h,d,g}^{k,n}}
\frac{\prod_{E \in E(\Gamma)} w_E}{|\Aut(\Gamma)|}
\prod_{V \in V(\Gamma)} N_{\Gamma, V}\,,
\end{equation}
where $|\Aut(\Gamma)|$ is the order of the group of permutation symmetries of $\Gamma$ as decorated weighted graph.
\end{lem}

\begin{proof}
We claim that \eqref{eq_degeneration} is the degeneration formula in relative Gromov-Witten theory \cite{MR1938113} applied to 
$\cX \rightarrow \A^1$
to compute $N_{g,\rel}^{\Delta_{h,d}^{\F_k},n}$, where
we distribute the $n$ point conditions $\alpha_1,\dots,\alpha_n$ appearing in
\eqref{eq_gw_h} by placing one on each of the 
$n$ components $\F_k^{(1)},\dots,\F_k^{(n)}$.
Indeed, a graph $\Gamma \in \cG_{h,d,g}^{k,n}$ as in Definition \ref{def_G} indexes a moduli space of stable maps to the special fiber $\cX_0$ that have virtually generically dual graph $\Gamma$: 
a vertex $V$ corresponds to a curve 
of genus $g_V$ and class $\beta_V$ contained in the component $\F_k^{(j_V)}$, the vertex $V_j$ corresponds to the curve with the $j$-th interior marked point where the point condition $\alpha_j$ is imposed, a bounded edge $E$ corresponds to a node on the divisor $D^{j_E}$, and an unbounded edge $E$ corresponds to a relative marking on either $D^0$ or $D^n$ (see Figure \ref{Fig:graphs}).  
Furthermore, the cohomological decorations $c_{(V,E)}$ implement the insertion of the diagonal class in the degeneration formula of \cite{MR1938113}, where we used the fact the the class of the diagonal $D^j \simeq \PP^1 \subset D^j \times D^j \simeq \PP^1 \times \PP^1$ is $1 \times p_j +p_j \times 1$. Finally, in the definition \ref{eq_gw_h} of 
$N_{g,\rel}^{\Delta_{h,d}^{\F_k},n}$, there is an insertion of the class $(-1)^g \lambda_g$ and we used \eqref{eq_lambda_gluing} to split the lambda class: the index $m_V$ in Definition \ref{def_G}(vii) means that we insert the class $(-1)^{m_V}
\lambda_{m_V}$ on the vertex $V$, and it is indeed what we did in the definition \eqref{eq_N_V} of $N_{\Gamma, V}$. 
\end{proof}

We use the following terminology in \S \ref{subsection_main_result} below. 

\begin{defn}\label{def_vertices_type}
Given a decorated weighted graph 
$\Gamma \in \cG_{h,d,g}^{k,n}$, and a vertex $V \in V(\Gamma)$, we say that:
\begin{itemize}
\item[(i)]$V$ is of type $A$ if $V \neq V_{j_V}$,
$h_V=0$, $\mu_V$ and $\nu_V$ are both the trivial $1$-part partition of $d_V$
(in particular $V$ is bivalent), one of the edges incident to $V$ has $c_{(V,E)}=1$ and the other has $c_{(V,E')}=p_{j_{E'}}$, and $m_V=g_V=0$.
\item[(ii)]$V$ is of type $B$ if $V=V_{j_V}$,
$h_V=0$, $\mu_V$ and $\nu_V$ are both the trivial $1$-part partition of $d_V$,
(in particular $V$ is bivalent), both edges incident to $V$ have $c_{(V,E)}=1$, and $m_V=g_V=0$.
\item[(iii)]$V$ is of type $C$ if $V=V_{j_V}$, 
$h_V=1$, all edges incident to $V$ have $c_{(V,E)}
=p_{j_E}$, and $m_V=g_V$. 
\end{itemize}
We denote by $\tilde{\cG}_{h,d,g}^{k,n}$ the set of graphs $\Gamma \in \cG_{h,d,g}^{k,n}$
whose vertices are all of type $A$, $B$ or $C$. 
\end{defn}

\begin{figure}
\center{\scalebox{.3}{\input{2.pspdftex}}}
\caption{A chain of edges in $\Gamma$ connected by bivalent vertices of type $A$ or $B$, and with endpoints of type $C$.}
\label{Fig:chain}
\end{figure}

\begin{lem} \label{lem_vertex_B}
Let $\Gamma \in \tilde{\cG}_{h,d,g}^{k,n}$. 
Let $c$ be a chain of edges in $\Gamma$ connected by bivalent vertices of type $A$ or $B$. Assume that the endpoints of $c$ are vertices of type $C$.
Then, the chain $c$ contains exactly one vertex of type $B$.
\end{lem}

\begin{proof}
We consider a chain $c$ with edges $E^{(1)},\dots,E^{(m)}$, connected
by bivalent vertices $V^{(1)},\dots, V^{(m-1)}$ of type $A$ and $B$. Assume first that the chain $c$ has two endpoints: we denote by $V^{0}$ and $V^{(m)}$ the vertices of type $C$ incident to 
$E^{(0)}$ and $E^{(m)}$
(see Figure \ref{Fig:chain}). 
We say that the edge $E^{(\ell)}$ is of type $[1,p]$ (resp.\ $[p,1]$)
if 
$c_{(V^{(\ell-1)},E^{(\ell)})}=1$ and
$c_{(V^{(\ell)},E^{(\ell)})}=p_{j_{E^{(\ell)}}}$
(resp.\ $c_{(V^{(\ell-1)},E^{(\ell)})}=p_{j_{E^{(\ell)}}}$ and
$c_{(V^{(\ell)},E^{(\ell)})}=1$).
By Definition \ref{def_G}, every edge is either of type $[1,p]$ or of type $[p,1]$. 

By Definition \ref{def_vertices_type}(i), the type of edges ``propagates" through vertices of type $A$:
if $V^{(\ell)}$ is of type $A$ and $E^{(\ell)}$ is of type $[p,1]$ (resp.\ $[1,p]$), then $E^{(\ell+1)}$ is of type $[p,1]$ (resp.\ $[1,p])$, whereas
by Definition \ref{def_vertices_type}(ii) a vertex of type $B$ ``flips" an edge $[p,1]$ into an edge of type $[1,p]$: if 
$V^{(\ell)}$ is of type $B$, then $E^{(\ell)}$ is of type $[p,1]$ and $E^{(\ell+1)}$ is of type 
$[1,p]$.
On the other hand, by Definition \ref{def_vertices_type}(iii) of vertices of type $C$, $E^{(1)}$ is of type $[p,1]$ and that 
$E^{(m)}$ is of type $[1,p]$, so the type of the edges needs to flip at some vertex from $[p,1]$ to $[1,p]$ and this vertex is of type $B$. It is the unique vertex of type $B$ as there is no type of vertex able to flip back $[1,p]$ to $[p,1]$.

If the chain $c$ has one or zero endpoints, that is if $E^{(0)}$ or $E^{(m)}$ are unbounded, the same argument applies using that $c_{(V,E)}=1$ if $E$ is unbounded by Definition \ref{def_G}(vi). 
\end{proof}

In \S \ref{subsection_main_result} below, we use the following construction of a marked 
$(\Delta_{h,d}^{\F_k},n)$-floor diagram
$\fD_\Gamma$ starting from a decorated weighted graph $\Gamma \in \tilde{\cG}_{h,d,g}^{k,n}$. 

\begin{defn} \label{def_D_Gamma}
Let $\Gamma \in \tilde{\cG}_{h,d,g}^{k,n}$. We define a marked oriented weighted graph $\fD_\Gamma$ as follows (see Figure \ref{Fig:graphs}).
The vertices of $\fD_\Gamma$ are the vertices of $\Gamma$ of type $C$, and each such vertex $V$ is marked by the index $j_V \in \{1,\dots,n\}$. 
Moreover, for each chain $c$ of edges in $\Gamma$ connected by bivalent vertices of type $A$ or $B$
and with endpoints of type $C$, there is an edge 
$E$ in $\fD_\Gamma$ incident to the endpoints of $c$. We define the weight $w_E$ of $E$ as the common weight in $\Gamma$ of the edges contained in $c$. The edge $E$ is marked by $j_{V} \in \{1,\dots,n\}$, where $V$
is the unique vertex of type $B$ contained in $c$ given by Lemma \ref{lem_vertex_B}. Finally, we orient the edges of $\fD_\Gamma$ so that the marking is increasing.
\end{defn}

\begin{lem} \label{lem_floor_diagram}
For every $\Gamma \in \tilde{\cG}_{h,d,g}^{k,n}$, 
the marked oriented weighted graph $\fD_\Gamma$ 
defined in Definition \ref{def_D_Gamma} is a marked $(\Delta_{h,d}^{\F_k},n)$-floor diagram as in Definitions 
\ref{def_floor_diagram}-\ref{def_marking}.
\end{lem}

\begin{proof}
It is mostly a direct consequence of Definitions \ref{def_G}-\ref{def_vertices_type}-\ref{def_D_Gamma}. The only part which requires an argument is why $\fD_\Gamma$ has first Betti number $g_{\Delta,n}=n+1-|\Delta|$. Let 
$V(\fD_\Gamma)$ (resp.\ $E^b(\fD_\Gamma)$,
$E^\infty(\fD_\Gamma)$) be the set of vertices 
(resp.\ bounded and unbounded edges) of $\fD_\Gamma$. By construction, we have 
$|E^b(\fD_\Gamma)|+|E^\infty(\fD_\Gamma)|+|V(\fD_\Gamma)|=n$ and $2|V(\fD_\Gamma)|+|E^{\infty}(\fD_\Gamma|=|\Delta|$, and so the first Betti number of $\fD_\Gamma$ is 
$|E^b(\fD_\Gamma)|-|V(\fD_\Gamma)|+1=n+1-|\Delta|$.
\end{proof}

\begin{lem}\label{lem_mult_identity}
For every $\Gamma \in \tilde{\cG}_{h,d,g}^{k,n}$, 
denoting by $E(\Gamma)$ the set of edges of 
$\Gamma$ and by $V^A(\Gamma)$ the set of vertices of $\Gamma$ of type $A$ , we have 
\begin{equation} \label{eq_mult_identity}
\prod_{E \in E(\Gamma)} w_E \prod_{V \in V^A(\Gamma)}\frac{1}{d_V} = \prod_{E \in E(\fD_\Gamma)}w_E^2\,,
\end{equation}
where $E(\fD_\Gamma)$ is the set of edges of 
$\fD_\Gamma$.
\end{lem}

\begin{proof}
Let $E^{(1)},\dots, E^{(m)}$ be a chain of edges of $\Gamma$ connected by bivalent vertices of type $A$ or $B$ and with endpoints of type $C$. 
All the edges of the chain have the same weight $w$
and all the vertices of type $A$ or $B$ have 
$d_V=w$. By Lemma \ref{lem_vertex_B}, the chain contains one vertex of type $B$ and $m-2$ vertices of type $A$ and so the contribution of the chain to the left-hand side of \eqref{eq_mult_identity} is $\frac{w^m}{w^{m-2}}=w^2$. 
\end{proof}

\subsection{Main result} \label{subsection_main_result}
In Lemma \ref{prop_deg_formula} below, we express for $\Delta=\Delta_d^{\PP^2}$ and 
$\Delta = \Delta_{h,d}^{\F_k}$
the relative Gromov-Witten invariants 
$N_{g,\rel}^{\Delta,n}$ defined in \eqref{eq_gw_h}
in terms of floor diagrams whose vertices are weighted by the relative Gromov-Witten invariants 
$N_{g,\rel}^{\mu\nu\emptyset\emptyset}$ of Hirzebruch surfaces defined in \eqref{eq_gw_blowup}.

\begin{lem} \label{prop_deg_formula}
Let $\Delta$ be a $h$-transverse balanced collection of vectors in $\Z^2$ of the form
$\Delta_d^{\PP^2}$ or $\Delta_{h,d}^{\F_k}$
as in Examples \ref{ex_P2}-\ref{ex_hirzebruch}.
Let $n$ be a nonnegative integer such that $g_{\Delta,n} \geq 0$.
Then the relative Gromov--Witten invariants 
$N_{g,\rel}^{\Delta,n}$ of $X_\Delta$ defined in 
\eqref{eq_gw_h} are given by:
\begin{equation} \label{eq_prop_deg_formula}\sum_{g \geq g_{\Delta,n}}
N_{g,\rel}^{\Delta,n} u^{2g-2+d_b+d_t}
=\end{equation}
\[\sum_{\fD}
\left(
\prod_{E \in E(\fD)} w_E^2 \right)
\left( \prod_{V \in V(\fD)} 
\sum_{g \geq 0} N_{g,\rel}^{\mu_V \nu_V \emptyset\emptyset} u^{2g-2+\ell(\mu_V)+\ell(\nu_V)}
\right)\,, \]
where the sum over $\fD$ is over the isomorphism classes of marked 
$(\Delta,n)$-floor diagrams $\fD$ as
in Definition \ref{def_floor_diagram}, 
$E(\fD)$ is the set of edges of $\fD$, $w_E$ is the weight of the edge 
$E \in E(\fD)$, $V(\fD)$ is the set of vertices of
$\fD$, and for every vertex $V \in V(\fD)$, 
$\mu_V$ (resp.\ $\nu_V$) is the partition  
whose parts are the weights of outgoing
(resp.\ ingoing) edges of $\fD$
incident to $V$. Moreover, $N_{g,\rel}^{\mu_V \nu_V\emptyset\emptyset}$ is the specialization 
$\mu=\mu_V$, $\nu=\nu_V$, $\rho=\sigma=\emptyset$ of the relative Gromov-Witten invariants 
$N_{g,\rel}^{\mu\nu\rho\sigma}$ defined in 
\eqref{eq_gw_blowup}.
\end{lem}

\begin{proof}
We first assume that $\Delta = \Delta_{h,d}^{\F_k}$, and so in particular $X_{\Delta}=\F_k$.
By Lemma \ref{lem_degeneration}, 
$N_{g,\rel}^{\Delta,n}$ is expressed by 
\eqref{eq_degeneration} in terms of the invariants 
$N_{\Gamma,V}$ defined in \eqref{eq_N_V}.
Recall tha we introduced in Definition \ref{def_vertices_type} the notions of vertices of type $A$, $B$ or $C$. By Lemma \ref{lem_dim_1}, 
if $V \neq V_{j_V}$, then $N_{\Gamma,V}=0$ unless $V$ is of type $A$, and $N_{\Gamma,V}=\frac{1}{d_V}$ if $V$ is of type $A$. By Lemma \ref{lem_dim_2}, if $V =V_{j_V}$, then 
$N_{\Gamma,V}=0$ unless $V$ is of type $B$ or $C$, 
$N_{\Gamma,V}=1$ if $V$ is of type $B$, and 
$N_{\Gamma,V}=N_{g_V,\rel}^{\mu_V\nu_V \emptyset \emptyset}$ if $V$ is of type $C$. 
Hence, we can rewrite 
\eqref{eq_degeneration} as a sum over the set 
$\tilde{\cG}_{h,d,g}^{k,n}$ of graphs $\Gamma \in \cG_{h,d,g}^{k,n}$ whose vertices are all of type $A$, $B$ or $C$: 
\begin{equation} 
N_{g,\rel}^{\Delta,n} 
= \sum_{\Gamma \in \tilde{\cG}_{h,d,g}^{k,n}}
\frac{\prod_{E \in E(\Gamma)} w_E}{|\Aut(\Gamma)|}
\prod_{V \in V^A(\Gamma)} \frac{1}{d_V} 
\prod_{V \in V^C(\Gamma)}N_{g_V, \rel}^{\mu_V\nu_V \emptyset \emptyset}\,,
\end{equation}
where $V^A(\Gamma)$ (resp.\ $V^B(\Gamma)$, $V^C(\Gamma)$) is the set of vertices of 
$\Gamma$ of type $A$ (resp.\ $B$, $C$).
In Definition \ref{def_D_Gamma}-Lemma \ref{lem_floor_diagram}, we defined a marked 
$(\Delta,n)$ floor diagram $\fD_\Gamma$ for every $\Gamma \in \tilde{\cG}_{h,d,g}^{k,n}$, and every marked $(\Delta,n)$-floor diagram can be uniquely obtained that way.  
Thus,
\eqref{eq_prop_deg_formula}
follows from Lemma \ref{lem_mult_identity}.

For $\Delta=\Delta_d^{\PP^2}$, we follow exactly the same sequence of aguments by adapting the results of \S \ref{section_degeneration} to the degeneration of $\PP^2$ to a chain made of one copy of $\PP^2$ and $n$ copies of $\F_1$, where each of the $n$ point insertions is inserted in a copy of $\F_1$.
\end{proof}

The following theorem is the main result of the present paper. It is a precise version of Theorem \ref{main_thm0} stated in the introduction.

\begin{thm} \label{main_thm_precise}
Let $\Delta$ be a $h$-transverse balanced collection of vectors in 
$\Z^2$ of the form $\Delta^{\PP^2}_d$ or $\Delta^{\F_k}_{h,d}$, and let 
$n$ be a nonnegative integer such that $g_{\Delta,n} \geq 0$. Then we have 
the equality 
\begin{equation} \label{eq_main}
\sum_{g \geq g_{\Delta,n}} N_{g,\rel}^{\Delta,n} u^{2g-2+d_b+d_t}=
u^{d_b+d_t-|\Delta|}
N^{\Delta,n}_{\floor}(q^{\frac{1}{2}})
\left((-i)
(q^{\frac{1}{2}} - q^{-\frac{1}{2}})
\right)^{2 g_{\Delta,n}-2+|\Delta|} 
\end{equation}
of power series in $u$ with rational coefficients,
where 
\begin{equation} q=e^{iu}=\sum_{m \geq 0} \frac{(iu)^m}{m!} \,.
\end{equation}
\end{thm}

\begin{proof}
Lemma \ref{prop_deg_formula} expresses $N_{g,\rel}^{\Delta,n}$ in terms of the invariants 
$N_{g,\rel}^{\mu\nu\emptyset\emptyset}$, which are computed in Theorem \ref{key_thm}. We obtain
\begin{align} 
&\sum_{g \geq g_{\Delta,n}}
N_{g,\rel}^{\Delta,n} u^{2g-2+d_b+d_t}= \\
&
u^{d_b+d_t-|\Delta|}\left((-i)
(q^{\frac{1}{2}} - q^{-\frac{1}{2}})
\right)^{2 g_{\Delta,n}-2+|\Delta|} 
\sum_{\fD} \prod_{E \in E(\fD)}w_E^2 \prod_{V \in V(\fD)}
\prod_{j=1}^{\ell(\mu_V)}\frac{[\mu_{V,j}]_q}{\mu_{V,j}} 
\prod_{l=1}^{\ell(\nu_V)}
\frac{[\nu_{V,l}]_q}{\nu_{V,l}} \,,
\end{align}
where we used that $\sum_{V}(\ell(\mu_V)+\ell(\nu_V))=2g_{\Delta,n}-2+|\Delta|$.
As every edge $E$ of 
$\fD$ with $w(E) \neq 1$ is connected to two vertices, we have
\begin{equation}
\prod_{E \in E(\fD)}w_E^2 
\prod_{V \in V(\fD)}
\prod_{j=1}^{\ell(\mu_V)}\frac{[\mu_{V,j}]_q}{\mu_{V,j}} 
\prod_{l=1}^{\ell(\nu_V)}
\frac{[\nu_{V,l}]_q}{\nu_{V,l}} 
= \prod_{E \in E(\fD)} [w_E]_q^2 =
m_{\fD}(q^{\frac{1}{2}}) \,,
\end{equation}
where we used the Definition \ref{def_q_mult} of the $q$-refined multiplicity of a floor diagram 
$\fD$. We conclude using the Definition \ref{def_N_delta_q} of $N^{\Delta,n}_{\floor}(q^{\frac{1}{2}})$. 
\end{proof}

The relation between relative Gromov-Witten invariants and $q$-refined floor diagrams 
given by Theorem \ref{main_thm_precise} can be read and exploited in both directions. 
First, the relative Gromov--Witten invariants $N_{g,\rel}^{\Delta,n}$ give an algebro-geometric realization of the
$q$-refined counts of floor diagrams $N^{\Delta,n}_{\floor}(q^{\frac{1}{2}})$ which is independent of tropical geometry. An illustration of the use of this geometric point of view to solve a priori purely combinatorial questions about the $q$-refined counts of floor diagrams is given by our proof of the $q$-refined Abramovich-Bertram formula in \S \ref{section_brugalle_conj}.
Conversely, Theorem \ref{main_thm_precise} can be viewed as providing a convenient tool to compute the relative Gromov-Witten invariants $N_{g,\rel}^{\Delta,n}$. Indeed, the combinatorial enumeration of floor diagrams allows for efficient calculations, as shown for example in 
\cite{MR2500574, arroyo2011recursive, MR3433282, MR3345189, MR3658147}.

Theorem \ref{main_thm_precise}
is analogous to the main result of 
\cite{MR3904449} which relates Block-Göttsche $q$-refined tropical curve counts and higher genus log Gromov-Witten invariants of toric surfaces with a lambda class insertion. Combining these two results we obtain in \S \ref{section_log} 
 a non-trivial comparison result (Theorem \ref{thm_log}) between log and relative Gromov-Witten invariants for $\PP^2$ and Hirzebruch surfaces.

\section{Dimension $3$ and stable pairs}
\label{section_dim_3}
In \S \ref{section_gw_3d}, we express the relative Gromov-Witten invariants 
$N_{g,\rel}^{\Delta,n}$ of a $h$-transverse toric surface $X_\Delta$ in terms of equivariant 
relative Gromov-Witten invariants of the $3$-fold $X_\Delta \times \A^1$. In \S \ref{section_pt}, we reformulate our main result, Theorem \ref{main_thm_precise} in terms of Pandharipande-Thomas stable pair invariants of $X_\Delta \times \A^1$.

\subsection{Relative Gromov-Witten invariants of $X_\Delta \times \A^1$}
\label{section_gw_3d}
In \eqref{eq_gw_h}, we defined the relative Gromov-Witten invariants 
$N_{g,\rel}^{\Delta,n}$ of a $h$-transverse toric surface $X_\Delta$
relatively to the divisor $D_b \cup D_t$. The definition \eqref{eq_gw_h}
involves the insertion of a lambda class 
$(-1)^{g-g_{\Delta,n}} \lambda_{g-g_{\Delta,n}}$. Following \cite[\S 1]{mpt}, we can interpret this lambda class insertion in Gromov-Witten theory of the surface $X_\Delta$ as an excess class coming from Gromov-Witten theory of the $3$-fold 
$X_\Delta \times \A^1$. Moduli spaces of stable maps to 
$X_\Delta \times \A^1$ are non-compact but admit a $\C^{*}$-action coming from the 
$\C^{*}$-action scaling the second factor in $X_\Delta \times \A^1$.
The $\C^{*}$-fixed locus being compact, one can define equivariant Gromov-Witten invariants. We define the equivariant relative Gromov-Witten invariants of 
$(X_\Delta \times \A^1)/((D_b \cup D_t)\times \A^1)$:
\begin{equation} \label{eq_gw_3d}
N_{g,\rel}^{\Delta,n,3d} \coloneqq \langle \mu,\delta^2| \alpha_1,\dots,\alpha_n|\nu,\delta^1\rangle_{g,\beta_\Delta}^{X_\Delta \times \A^1/(D_b\cup D_t)\times \A^1} \,,
\end{equation}
where all the cohomology classes on $X_\Delta$ in \eqref{eq_gw_h}
are interpreted as their pullback on $X_\Delta \times \A^1$ by the projection on the first factor. The equivariant invariant $N_{g,\rel}^{\Delta,n,3d}$ is an element of $\Q(t)$, where $t$ is the equivariant parameter. 
Following \cite[\S 1]{mpt}, we can apply the localization formula to obtain:
\begin{equation}\label{eq:2d_3d}
    N_{g,\rel}^{\Delta,n,3d}= t^{g_{\Delta,n}-1} N_{g,\rel}^{\Delta,n}\,.
\end{equation}
The right-hand side of \eqref{eq:2d_3d} is a $3$-fold invariant defined without lambda class insertion, whereas the left-hand side of \eqref{eq:2d_3d} is a
surface invariant defined with lambda class insertion.

\subsection{Stable pair invariants of $X_\Delta \times \A^1$}
\label{section_pt}
Higher genus Gromov-Witten theory of 3-folds is conjecturally equivalent to the sheaf counting theories given by Donaldson-Thomas counts of ideal sheaves \cite{mnop1, mnop2}
and 
Pandharipande-Thomas counts of stable pairs \cite{pt}. 

We consider the moduli space of relative stable pairs
 $\cO_X \xrightarrow{s} F$ on $X_\Delta \times \A^1 /((D_b \cup D_t) \times \A^1)$, where the sheaf $F$ has support of class $\beta_\Delta$ and Euler characteristic $\chi(F)=m$.
We denote by $P_{m,\rel}^{\Delta,n}$ the equivariant
relative stable pair invariant of $X_\Delta \times \A^1 /((D_b \cup D_t) \times \A^1)$ extracted from this moduli space by inserting $n$ times the class pullback of the class Poincaré dual to a point in $X_\Delta$, by considering 
contact orders with $D_b \cup D_t$ 
defined by partitions whose parts are all $1$, and 
by inserting  the class $1$ at each contact point with $D_b \cup D_t$.
We refer to \cite{mnop2, pt, liwu} for the theory of relative stable pairs and for the definition of insertions. The equivariant invariant $P_{m,\rel}^{\Delta,n}$ is an element of $\Q(t)$, where $t$ is the equivariant parameter. Finally, we define
\begin{equation}
    P_\rel^{\Delta,n}(q) \coloneqq \sum_{m \in \Z} P_{m,\rel}^{\Delta,n} (-q)^m\,.
\end{equation}

It follows from \cite{moop}, \cite[\S 5.1]{mpt} that the Gromov-Witten/stable pairs correspondence is known for equivariant theories with primary insertions of toric 3-folds relatively to smooth toric divisors. 
For $\Delta=\Delta_d^{\PP^2}$ or $\Delta_{h,d}^{\F_k}$, we can state the correspondence as follows (see \cite[Conjecture 3R]{mnop2}):
\begin{itemize}
    \item[(i)] $P_\rel^{\Delta,n}(q)$ is the $q$-Laurent expansion of a rational function in $q$. 
    \item[(ii)]After the change of variables $q=e^{iu}$,
    \begin{equation}  \label{eq_mnop}
        (-i)^{|\Delta|} u^{2h} \sum_g N_{g,\rel}^{\Delta,n,3d} u^{2g-2+d_b+d_t}
        = q^{-\frac{|\Delta|}{2}} P_\rel^{\Delta,n}(q) \,.
    \end{equation}
\end{itemize}
In general, the Gromov-Witten stable pairs correspondence requires to take the logarithm of a generating series of stable pair invariants where we sum over the curve class. However, for  $\Delta=\Delta_d^{\PP^2}$ or $\Delta_{h,d}^{\F_k}$, the curve class is uniquely determined by the tangency conditions, which is why no logarithm or exponential appear in \eqref{eq_mnop}.

Using \eqref{eq_id}, we can combine \eqref{eq:2d_3d} and \eqref{eq_mnop}
to rephrase Theorem \ref{main_thm_precise} as a correspondence between 
$q$-refined counts of floor diagrams and stable pair invariants.

\begin{thm} \label{thm_main_pt}
Let $\Delta$ be a $h$-transverse balanced collection of vectors in 
$\Z^2$ of the form $\Delta^{\PP^2}_d$ or $\Delta^{\F_k}_{h,d}$, and let 
$n$ be a nonnegative integer such that $g_{\Delta,n} \geq 0$. Then we have 
the equality 
\begin{equation} \label{eq_main_pt}
(-1)^{g_{\Delta,n}-1} q^{-\frac{|\Delta|}{2}} P_\rel^{\Delta,n}(q)= t^{g_{\Delta,n}-1}
N^{\Delta,n}_{\floor}(q^{\frac{1}{2}})
\left(
q^{\frac{1}{2}} - q^{-\frac{1}{2}}
\right)^{2 g_{\Delta,n}-2+|\Delta|}\,. 
\end{equation}
\end{thm}

Unlike Theorem \ref{main_thm_precise}, Theorem \ref{thm_main_pt} 
is an equality between rational functions (and in fact Laurent polynomials) in $q^{\frac{1}{2}}$ and no change of variables is required.

\section{Comparison with log invariants} \label{section_log}
Let $\Delta$ be a $h$-transverse balanced collection of vectors in 
$\Z^2$ of the form $\Delta^{\PP^2}_d$ or $\Delta^{\F_k}_{h,d}$,
and let $n$ be a nonnegative integer such that $g_{\Delta,n} \geq 0$.
In \S \ref{section_relative_gw}, we defined Gromov-Witten invariants 
$N^{\Delta,n}_{g,\rel}$ of $X_\Delta$ relative to the smooth divisor 
$D_b \cup D_t$ given by the disjoint union of the two ``horizontal" toric divisors. 
The main result of the present paper, Theorem \ref{main_thm_precise}, 
expresses these relative Gromov-Witten invariants in terms of refined counts of floor diagrams.

In \S 2.2 of \cite{MR3904449}, we defined log Gromov-Witten invariants\footnote{In \cite{MR3904449}, the notation used is 
$N_g^{\Delta,n}$. Here, we use the notation $N_{g, \loga}^{\Delta,n}$
in order to make clear that they are log invariants, a priori distinct from the relative invariants considered in the present paper.} $N_{g,\loga}^{\Delta, n}$ of $X_{\Delta}$ relative to the singular divisor given by the union of the 
toric divisors of $X_{\Delta}$. The difference between $N^{\Delta,n}_{g,\rel}$
and $N_{g,\loga}^{\Delta, n}$ is that in the definition of  $N^{\Delta,n}_{g,\rel}$,
there is no condition involving the non-horizontal toric divisors. In particular, relative stable maps contributing to $N^{\Delta,n}_{g,\rel}$ can have components falling into a non-horizontal toric divisor, whereas such map needs to come with a non-trivial log structure in order to contribute to $N_{g,\loga}^{\Delta, n}$.

The main result of \cite{MR3904449} expresses the log Gromov-Witten invariants $N_{g,\loga}^{\Delta, n}$ in terms of refined counts of tropical curves.
Going back to the original correspondence obtained by Brugallé and Mikhalkin \cite{MR2500574}
between foor diagrams and ``vertically stretched" tropical curves, we obtain an explicit correspondence between $N^{\Delta,n}_{g,\rel}$ and $N_{g,\loga}^{\Delta, n}$. 

\begin{thm} \label{thm_log}
Let $\Delta$ be a $h$-transverse balanced collection of vectors in 
$\Z^2$ of the form $\Delta^{\PP^2}_d$ or $\Delta^{\F_k}_{h,d}$,
and let $n$ be a nonnegative integer such that $g_{\Delta,n} \geq 0$.
Then, $N^{\Delta,n}_{g,\rel}
=  N^{\Delta,n}_{g,\loga}$.
\end{thm}

\begin{proof}
This follows directly from the combination of Theorem
\ref{main_thm_precise}, of Theorem 5 of 
\cite{MR3904449} and from the correspondence between 
floor diagrams and topical curves given by Proposition 5.9 of 
\cite{MR2500574}.
\end{proof}

One can probably obtain a direct proof of Theorem \ref{thm_log}
using a degeneration to the normal cone of the non-horizontal toric 
divisors of $X_\Delta$. One should apply to this degeneration 
an argument in log Gromov-Witten theory, as in 
\cite{MR3904449, bousseau2018quantum_tropical, bousseau2018example}, and use Lemma \ref{lem_vanishing_2}.
Given such direct proof, one could reverse the logic and derive 
Theorem \ref{main_thm_precise} from \cite{MR3904449}, but this would go 
against the spirit of the present paper, which was to remain in the realm of 
relative Gromov-Witten theory and to not use any logarithmic technology.

\section{Application to Block-Göttsche invariants of $\F_0$ and $\F_2$}
\label{section_brugalle_conj}

In this section, as application of Theorem 
\ref{main_thm_precise}, we give a proof of
Conjecture 4.6 of \cite{brugalle2018invariance}, relating Block-Göttsche invariants of $\F_0$ and $\F_2$, see Corollary \ref{cor_AB}.

\subsection{Gromov-Witten invariants of $\F_0$}
We consider $\F_0=\PP^1 \times \PP^1$, and 
we denote $D_0^{\F_0}=\PP^1 \times \{0\}$,
$D_{-0}^{\F_0}=\PP^1 \times \{\infty\}$, $F^{\F_0}=\{0\} \times \PP^1$,
and
$\beta_{a,b}^{\F_0} \coloneqq a[D_0^{\F_0}]
+(a+b) [F^{\F_0}] \in H_2(\F_0,\Z)$.
Using notations of \S \ref{section_h_transverse}, we have 
\begin{equation}\beta_{a,b}^{\F_0}
=\beta_{\Delta^{\F_0}_{a,a+b}} \,.\end{equation}
Applying \eqref{eq_gw_h} to $\Delta=\Delta^{\F_0}_{a,b}$
defines relative Gromov-Witten invariants
\begin{equation} \label{eq_f_0_relative}
    N_{g,\rel}^{\Delta^{\F_0}_{a,a+b},n}
\end{equation}
of $\F_0 /(D_0^{\F_0} \cup D_{-0}^{\F_0})$. In this case, the class 
$\lambda_{g-g_{\Delta,n}}$ is inserted, where 
$g_{\Delta,n}=n+1-|\Delta|=n+1-4a-2b$.

On the other hand, we also consider absolute Gromov--Witten invariants of 
$\F_0$:
\begin{equation}\label{eq_f_0_absolute} N_{g,(a,b)}^{\F_0,n} 
\coloneqq \langle (-1)^{g-(n+1-4a-2b)} \lambda_{g-(n+1-4a-2b)}; \alpha_1,\dots,\alpha_n \rangle_{g,\beta_{a,b}^{\F_0}}^{\F_0} \,,
\end{equation}
where $\alpha_1,\dots,\alpha_n \in H^4(\F_0,\Z)$ are cohomology classes Poincaré dual to a point.
The following Lemma compares the relative invariants 
\eqref{eq_f_0_relative} with the absolute invariants 
\eqref{eq_f_0_absolute}.

\begin{lem} \label{lem_F_0}
For every nonnegative integers $a$, $b$ and $n$ such that $n+1-4a-2b \geq 0$,
\begin{equation}
N_{g,(a,b)}^{\F_0,n}
= N_{g,\rel}^{\Delta^{\F_0}_{a,a+b},n} 
\end{equation}
\end{lem}

\begin{proof}
This follows from the degeneration formula in Gromov-Witten theory
\cite{MR1938113}
applied to the degeneration of $\F_0$ to the normal cone of the smooth divisor $D_0^{\F_0} \cup D_{-0}^{\F_0}$, that is, to $\F_0^{(1)} \cup \F_0^{(2)} \cup \F_0^{(3)}$, keeping the $n$ point insertions in the middle $\F_0^{(2)}$.
By Lemma \ref{lem_gluing1}, the non-zero terms in the degeneration formula are indexed by genus $0$ graphs $\Gamma$ and we insert a top lambda class at every vertex.
If $V$ is a vertex of $\Gamma$ corresponding to a curve in $\F_0^{(1)}$
or $\F_0^{(3)}$ of class $h_V[D_0^{\F_0}]+d_V[F^{\F_0}]$, genus $g_V$, and contact orders defining a partition $\mu$ of $d_V$, the dimension of the corresponding moduli stack of relative stable maps is $g-1+d_V+2h_V+\ell(\mu)$, whereas the maximal degree of an insertion is $g+\ell(\mu)$ (corresponding to the insertion of $\lambda_{g_V}$ and to fixing the position of the $\ell(\mu)$ contact points). 
Therefore, the contribution of $\Gamma$ is $0$ unless 
$g-1+d_V+2h_V+\ell(\mu) \leq g+\ell(\mu)$, that is, $d_V+2h_V \leq 1$, and so $d_V=1$ and $h_V=0$. By Lemma 
\ref{lem_vanishing_2}, the contribution of such vertex is $1$ if $g_V=0$ and $0$ if $g_V>0$.
\end{proof}

For $g=n+1-4a-2b$, Lemma
\ref{lem_F_0} reduces to the well-known fact that absolute and relative Gromov-Witten invariants of $\F_0$ coincide (and are in fact enumerative).

\subsection{Relative Gromov-Witten invariants of $\F_2$}
We denote $D_2^{\F_2}$ and $D_{-2}^{\F_2}$
the toric divisors of $\F_2$ such that 
$(D_2^{\F_2})^2=2$ and $(D_{-2}^{\F_2})^2
=-2$. We denote $[F^{\F_2}]$ the class of a fiber of the natural projection $\F_2 \rightarrow \PP^1$.
Using notations of \S \ref{section_h_transverse}, we have 
\begin{equation} \beta_{\Delta^{\F_2}_{h,d}}
= h[D_2^{\F_2}] + d[F^{\F_2}]\,.\end{equation}
Applying \eqref{eq_gw_h} to $\Delta=\Delta^{\F_2}_{h,d}$
defines relative Gromov-Witten invariants
\begin{equation} \label{eq_f_2_relative}
    N_{g,\rel}^{\Delta^{\F_2}_{h,d},n}
\end{equation}
of $\F_2 /(D_2^{\F_2} \cup D_{-2}^{\F_2})$. 
In this case, the class 
$\lambda_{g-g_{\Delta,n}}$ is inserted, where 
$g_{\Delta,n}=n+1-|\Delta|=n+1-4h-2d$.

On the other hand, we also consider relative Gromov--Witten invariants of 
$\F_2/D_{-2}^{\F_2}$:
\begin{equation}\label{eq_f_2_absolute} N_{g,(h,d)}^{\F_2/D_{-2}^{\F_2},n} 
\coloneqq \langle (-1)^{g-(n+1-4h-2d)} \lambda_{g-(n+1-4h-2d)}; \alpha_1,\dots,\alpha_n |(\eta^1,\delta^1)\rangle_{g,\beta_{\Delta_{h,d}^{\F_2}}}\,,
\end{equation}
where $\alpha_1,\dots,\alpha_n \in H^4(\F_2,\Z)$ are cohomology classes Poincaré dual to a point, $\eta^1=(\eta^1_j)_{1\leq j\leq d}$ is the partition of 
$\beta_{\Delta_{h,d}^{\F_2}} \cdot D_{-2}^{\F_2}$ whose all parts are equal to $1$, and $\delta^1=(\delta^1_j)_{1\leq j\leq d}$ with $\delta^1_j=1 \in H^0(D_{-2}^{\F_2},\Z)$ for all $j$.
The following Lemma compares the relative invariants 
\eqref{eq_f_2_relative} of $\F_2 /(D_2^{\F_2} \cup D_{-2}^{\F_2})$
with the absolute invariants 
\eqref{eq_f_2_absolute} of 
$\F_2/D_{-2}^{\F_2}$.

\begin{lem} \label{lem_F_2}
For every nonnegative integers $h$, $d$  and $n$ such that $n+1-4h-2d \geq 0$, 
\begin{equation}
N_{g,(h,d)}^{\F_2/D_{-2}^{\F_2},n}
=  N_{g,\rel}^{\Delta^{\F_2}_{h,d},n} 
\,.\end{equation}
\end{lem}

\begin{proof}
This follows from the degeneration formula in Gromov-Witten theory
\cite{MR1938113}
applied to the degeneration of $\F_2$ to the normal cone of the smooth divisor $D_2^{\F_2}$, that is, to $\F_2^{(1)} \cup \F_2^{(2)}$, keeping the $n$ point insertions in the first $\F_2^{(2)}$. We argue as in the proof of Lemma \ref{lem_F_0}.
By Lemma \ref{lem_gluing1}, the non-zero terms in the degeneration formula are indexed by genus $0$ graphs $\Gamma$ and we insert a top lambda class at every vertex.
If $V$ is a vertex of $\Gamma$ corresponding to a curve in $\F_2^{(2)}$
of class $h_V[D_2^{\F_2}]+d_V[F^{\F_2}]$, genus $g_V$, and contact orders defining a partition $\mu$ of $d_V$, the dimension of the corresponding moduli stack of relative stable maps is $g-1+d_V+4h_V+\ell(\mu)$, whereas the maximal degree of an insertion is $g+\ell(\mu)$ (corresponding to the insertion of $\lambda_{g_V}$ and to fixing the position of the $\ell(\mu)$ contact points). 
Therefore, the contribution of $\Gamma$ is $0$ unless 
$g-1+d_V+4h_V+\ell(\mu) \leq g+\ell(\mu)$, that is, $d_V+4h_V \leq 1$, and so $d_V=1$ and $h_V=0$. By Lemma 
\ref{lem_vanishing_2}, the contribution of such vertex is $1$ if $g_V=0$ and $0$ if $g_V>0$.
\end{proof}

For $g=n+1-4h-2d$, Lemma
\ref{lem_F_2} reduces to the well-known fact that Gromov-Witten invariants of $\F_2$ relative to 
$D_2^{\F_2} \cup D_{-2}^{\F_2}$ and Gromov-Witten invariants of $\F_2$ relative to 
$D_{-2}^{\F_2}$ coincide (and are in fact enumerative).

\subsection{Comparison of invariants of $\F_0$ and $\F_2$}

\begin{figure}
\center{\scalebox{0.3}{\input{S1.pspdftex}}}
\caption{Degeneration of a curve in the degeneration of 
$\F_0=\PP^1 \times \PP^1$ to the normal cone of the diagonal 
$\Delta \subset \F_0$, with $a=3$, $b=2$, $j=2$.}
\label{Fig:S1}
\end{figure}

\begin{thm} \label{thm_AB}
For every nonnegative integers $a$, $b$ and $n$ such that $n+1-4a-2b \geq 0$, and for every $g  \geq n+1-4a-2b$, 
\begin{equation}
N^{\F_0,n}_{g,(a,b)}= \sum_{j=0}^a
\left( \begin{array}{c}
b+2j  \\
j  \end{array} \right)
N^{\F_2/D_{-2}^{\F_2},n}_{g,(a-j,b+2j)} \,.
\end{equation}
\end{thm}

\begin{proof}
This follows from the degeneration formula in Gromov-Witten theory \cite{MR1938113} applied to
the degeneration of $\F_0=\PP^1 \times \PP^1$ to the normal cone of its diagonal
$\Delta$. As $\Delta \cdot \Delta=2$, the special fiber is $\F_0 \cup \F_2$, and we send the
$n$ point insertions to $\F_2$.

By Lemma \ref{lem_gluing1}, the non-zero terms in the degeneration formula are indexed by genus $0$ graphs $\Gamma$ and we insert a top lambda class at every vertex.
If $V$ is a vertex of $\Gamma$ corresponding to a curve in $\F_0$
of class $\beta_V=h_V[D_0^{\F_0}]+d_V[F^{\F_0}]$, genus $g_V$, and contact orders defining a partition $\mu$ of $\beta_V \cdot \Delta = h_V+d_V$, the dimension of the corresponding moduli stack of relative stable maps is $g-1+d_V+h_V+\ell(\mu)$, whereas the maximal degree of an insertion is $g+\ell(\mu)$ (corresponding to the insertion of $\lambda_{g_V}$ and to fixing the position of the $\ell(\mu)$ contact points). 
Therefore, the contribution of $\Gamma$ is $0$ unless 
$g-1+d_V+h_V+\ell(\mu) \leq g+\ell(\mu)$, that is, $d_V+h_V \leq 1$, and so either $(d_V,h_V)=(1,0)$ or $(d_V,h_V)=(0,1)$. By Lemma 
\ref{lem_vanishing_2}, the contribution of such vertex is $1$ if $g_V=0$ and $0$ if $g_V>0$. So, we assume that all vertices $V$ in 
$\Gamma$ corresponding to curves in $\F_0$ have $g_V=0$, and either 
$(h_V,d_V)=(1,0)$ or $(h_V,d_V)=(0,1)$.
We denote by $j$ the number of such vertices with $(h_V,d_V)=(1,0)$.
As $\beta_{a,b}^{\F_0} \cdot F^{\F_0}=a$, we have 
$1 \leq j \leq a$.

As $\Gamma$ is connected, there exists a unique vertex $V_0$ of 
$\Gamma$ corresponding to curves in $\F_2$, and the associated genus is 
$g_{V_0}=g$.
As $\beta_{a,b}^{\F_0} \cdot \Delta = 2a+b$, the associated class $\beta_{V_0}$
satisfies $\beta \cdot D_2^{\F_2}=2a+b$ and 
$\beta \cdot F^{\F_2}=a-j$. This uniquely determines $\beta_{V_0}$
to be $(a-j)[D_2^{\F_2}]+(b+2j)[F^{\F_2}]=\beta_{\Delta^{\F_2}_{a-j,b+2j}}$. In particular,
we have $\beta_{V_0} \cdot D_{-2}^{\F^2}=b+2j$, and so there are in total 
$b+2j$ vertices in $\Gamma$ corresponding to curves in $\F_0$.
The binomial coefficient
\begin{equation} \left( \begin{array}{c}
b+2j  \\
j  \end{array} \right)\end{equation}
is the number of ways to 
choose the $j$ vertices with $(h_V,d_V)=(1,0)$
and the $b+j$ vertices with $(h_V,d_V)=(0,1)$ among the 
$b+2j$ vertices corresponding to curves in $\F_0$, see Figure \ref{Fig:S1}.
\end{proof}

For $g=n+1-4a-2b$,
Theorem \ref{thm_AB} reduces to the classical formula, due to Abramovich-Bertram \cite{MR1837110} in genus zero and to Vakil \cite{MR1771228} in higher genus,
comparing enumerative invariants of $\F_0$ and $\F_2$. 

Finally, the following corollary is the precise version of
Theorem \ref{thm_f0_f2_intro}.

\begin{cor} \label{cor_AB}
For every nonnegative integers $a$, $b$, and $n$ such that $n+1-4a-2b \geq 0$,
we have 
\begin{equation} N_\floor^{\Delta^{\F_0}_{a,a+b},n}(q^{\frac{1}{2}})
= \sum_{j=0}^a 
\left( \begin{array}{c}
b+2j  \\
j  \end{array} \right)
N_\floor^{\Delta^{\F_2}_{a-j,b+2j},n}(q^{\frac{1}{2}})\,.
\end{equation}
\end{cor}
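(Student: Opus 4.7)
The plan is to derive Corollary~\ref{cor_AB} by combining Theorem~\ref{thm_AB} with Theorem~\ref{main_thm_precise}, bridged by Lemmas~\ref{lem_F_0} and~\ref{lem_F_2}. The strategy is to rewrite both sides of Theorem~\ref{thm_AB} in terms of refined floor diagram counts and show that all the auxiliary factors of $2\sin(u/2)$ cancel.

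First, I would use Lemma~\ref{lem_F_0} to re-express the left-hand side $\sum_g N^{\F_0,n}_{g,(a,b)} u^{2g-2}$ of Theorem~\ref{thm_AB} in terms of the fully-relative generating series for $\F_0$ that appears in Theorem~\ref{main_thm_precise}, and Lemma~\ref{lem_F_2} analogously for each summand on the right-hand side. This recasts Theorem~\ref{thm_AB} as an identity purely among the fully-relative Gromov-Witten generating series $\sum_g N^{\Delta,n}_{g,\rel} u^{2g-2+d_b+d_t}$ of Theorem~\ref{main_thm_precise}, weighted by various powers of $1/(2\sin(u/2))$ coming from the two Lemmas and from Theorem~\ref{thm_AB} itself.

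Next, I would apply Theorem~\ref{main_thm_precise} to convert each such relative generating series into the product of the refined floor diagram count $N^\Delta_\floor(q^{\frac{1}{2}})$ and an explicit power of $s = (-i)(q^{\frac{1}{2}} - q^{-\frac{1}{2}}) = 2\sin(u/2)$. A key observation is that all the $h$-transverse collections $\Delta$ involved, namely $\Delta^{\F_0}_{a,a+b}$ and $\Delta^{\F_2}_{a-j,b+2j}$ for $0 \leq j \leq a$, share the same cardinality $|\Delta| = 4a+2b$, and hence the same value $g_{\Delta,n} = n+1-4a-2b$; consequently, a common factor of $s^{2g_{\Delta,n}-2}$ can be pulled out of every term in the resulting identity.

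After this substitution, collecting all powers of $u$ and $s$ should cause every $s$-factor to cancel, leaving the desired polynomial identity between refined floor diagram counts. The main obstacle is the careful bookkeeping of these exponents across all four results; the common value $|\Delta| = 4a+2b$ for the collections involved is precisely what enables the cancellation.
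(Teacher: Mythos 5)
Your proposal is correct and is essentially the paper's own proof: the paper likewise deduces the corollary by combining Theorem \ref{thm_AB} with Theorem \ref{main_thm_precise} through Lemmas \ref{lem_F_0} and \ref{lem_F_2}, the only point to verify being the cancellation of all factors $(-i)(q^{\frac{1}{2}}-q^{-\frac{1}{2}})=2\sin\left(\frac{u}{2}\right)$. Your key observation that every collection involved has $|\Delta|=4a+2b$, hence the same $g_{\Delta,n}=n+1-4a-2b$, is precisely what makes the cancellation work, each term reducing to $N_\floor^{\Delta}(q^{\frac{1}{2}})$ times the common factor $\left(2\sin\left(\frac{u}{2}\right)\right)^{2g_{\Delta,n}-2}$.
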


\begin{proof}
This follows from the combination of Theorem \ref{thm_AB},
Theorem \ref{main_thm_precise}, Lemma
\ref{lem_F_0} and Lemma \ref{lem_F_2}.
The only thing to check is the cancellation of the factors $u$ and 
$(-i)(q^{\frac{1}{2}}-q^{-\frac{1}{2}})$.
\end{proof}

 The statement of Corollary 
\ref{cor_AB} is Conjecture 4.6 of \cite{brugalle2018invariance}. The previously known cases of this Conjecture were the specialization $q=1$ (that is, the formula proved by Abramovich-Bertram and Vakil), and for $b=0$, $n=4a-1$, the specialization $q=-1$ (as consequence of a surgery formula for Welschinger invariants, see Proposition 2.7 of \cite{MR3433282}).

It is implicit in  \cite{brugalle2018invariance} and motivated by a surgery formula for Welschinger invariants with pairs of complex conjugated  point constraints,
    that a version of Conjecture 4.6 of   \cite{brugalle2018invariance} should also hold for a class of G\"ottsche-Schroeter invariants, \cite{MR3875360}, tropical refinement of genus zero Gromov-Witten counts for which some of the point insertions also come with insertion of a psi class.
    This conjecture can be proved as Corollary \ref{cor_AB} using the geometric interpretation of 
    G\"ottsche-Schroeter invariants given in Appendix B of the first arxiv version of \cite{MR3904449}. It does not seem completely obvious to obtain a proof in the spirit of the present paper, that is, without tropical and logarithmic technology.

Corollary \ref{cor_AB} is an equality between 
combinatorially defined $q$-refined counts of floor diagrams, and so, as suggested in \cite{brugalle2018invariance}, it is very likely that a combinatorial proof exists. Our proof is geometric: once we have, thanks to Theorem \ref{main_thm_precise}, a Gromov-Witten interpretation of these combinatorial 
objects, we just used the fact that the usual proof by degeneration of the Abramovich-Bertram formula 
goes through and gives the $q$-refined statement.

\vspace{+8 pt}
\noindent
Institute for Theoretical Studies \\
ETH Zurich \\
8092 Zurich, Switzerland \\
pboussea@ethz.ch


\begin{thebibliography}{CJMR17}

\bibitem[1]{MR1837110}
D.~Abramovich and A.~Bertram.
\newblock The formula {$12=10+2\times 1$} and its generalizations: counting
  rational curves on {$\mathbb{F}_2$}.
\newblock In {\em Advances in algebraic geometry motivated by physics
  ({L}owell, {MA}, 2000)}, volume 276 of {\em Contemp. Math.}, pages 83--88.
  Amer. Math. Soc., Providence, RI, 2001.

\bibitem[2]{MR3658147}
F.~Ardila and E.~Brugall\'{e}.
\newblock The double {G}romov-{W}itten invariants of {H}irzebruch surfaces are
  piecewise polynomial.
\newblock {\em Int. Math. Res. Not. IMRN}, (2):614--641, 2017.

\bibitem[3]{arroyo2011recursive}
A.~Arroyo, E.~Brugall{\'e} and 
L.L.~De Medrano.
\newblock Recursive formulas for {W}elschinger invariants of the projective plane
\newblock {\em Int. Math. Res. Not. IMRN}, (5):1107--1134, 2011.


\bibitem[4]{MR3579972}
F.~Block and L.~G\"ottsche.
\newblock Fock spaces and refined {S}everi degrees.
\newblock {\em Int. Math. Res. Not. IMRN}, (21):6553--6580, 2016.

\bibitem[5]{MR3453390}
F.~Block and L.~G\"ottsche.
\newblock Refined curve counting with tropical geometry.
\newblock {\em Compos. Math.}, 152(1):115--151, 2016.


\bibitem[6]{bousseau2018example}
P.~Bousseau.
\newblock On an example of quiver {DT}/relative {GW} correspondence.
\newblock {\em arXiv preprint arXiv:1811.02448}, 2018.

\bibitem[7]{bousseau2018quantum_tropical}
P.~Bousseau.
\newblock The quantum tropical vertex.
\newblock {\em Geom. Top.}, 24(3), 1297-1379, 2020.

\bibitem[8]{MR3904449}
P.~Bousseau.
\newblock Tropical refined curve counting from higher genera and lambda
  classes.
\newblock {\em Invent. Math.}, 215(1):1--79, 2019.


\bibitem[9]{MR3345189}
E.~Brugall\'{e}.
\newblock Floor diagrams relative to a conic, and {GW}-{W} invariants of del
  {P}ezzo surfaces.
\newblock {\em Adv. Math.}, 279:438--500, 2015.

\bibitem[10]{brugalle2018invariance}
E.~Brugall{\'e}.
\newblock On the invariance of {W}elschinger invariants.
\newblock {\em Algebra i Analiz}, 32(2), 1--20, 2020.

\bibitem[11]{MR2359091}
E.~Brugall\'e and Grigory Mikhalkin.
\newblock Enumeration of curves via floor diagrams.
\newblock {\em C. R. Math. Acad. Sci. Paris}, 345(6):329--334, 2007.

\bibitem[12]{MR2500574}
E.~Brugall\'{e} and Grigory Mikhalkin.
\newblock Floor decompositions of tropical curves: the planar case.
\newblock In {\em Proceedings of {G}\"{o}kova {G}eometry-{T}opology
  {C}onference 2008}, pages 64--90. G\"{o}kova Geometry/Topology Conference
  (GGT), G\"{o}kova, 2009.


\bibitem[13]{MR3433282}
E.~Brugall\'{e} and Nicolas Puignau.
\newblock On {W}elschinger invariants of symplectic 4-manifolds.
\newblock {\em Comment. Math. Helv.}, 90(4):905--938, 2015.


\bibitem[14]{MR2115262}
J.~Bryan and R.~Pandharipande.
\newblock Curves in {C}alabi-{Y}au threefolds and topological quantum field
  theory.
\newblock {\em Duke Math. J.}, 126(2):369--396, 2005.

\bibitem[15]{cavalieri2017counting}
R.~Cavalieri, P.~Johnson, H.~Markwig, and D.~Ranganathan.
\newblock Counting curves on toric surfaces: tropical geometry and the {F}ock
  space.
\newblock {\em arXiv preprint arXiv:1706.05401}, 2017.

\bibitem[16]{cooper2017fock}
Y.~Cooper.
\newblock A {F}ock {S}pace approach to {S}everi {D}egrees of {H}irzebruch
  {S}urfaces.
\newblock {\em arXiv preprint arXiv:1709.01159}, 2017.

\bibitem[17]{MR3653237}
Y.~Cooper and R.~Pandharipande.
\newblock A {F}ock space approach to {S}everi degrees.
\newblock {\em Proc. Lond. Math. Soc. (3)}, 114(3):476--494, 2017.


\bibitem[18]{Fultontoric} 
W.~Fulton.
\newblock { \em Introduction to toric varieties},
Annals of Mathematics Studies 131,
\newblock Princeton University Press, Princeton, 1993.
    


\bibitem[19]{MR1644323}
W.~Fulton.
\newblock {\em Intersection theory}, volume~2 of {\em Ergebnisse der Mathematik
  und ihrer Grenzgebiete. 3. Folge. A Series of Modern Surveys in Mathematics
  [Results in Mathematics and Related Areas. 3rd Series. A Series of Modern
  Surveys in Mathematics]}.
\newblock Springer-Verlag, Berlin, second edition, 1998.

\bibitem[20]{gathmann1996counting}
A.~Gathmann.
\newblock Counting rational curves with multiple points and Gromov-Witten invariants of blow-ups.
\newblock { \em arXiv preprint alg-geom/9609010}, 1996.


\bibitem[21]{MR3875360}
L.~G\"{o}ttsche and F.~Schroeter.
\newblock Refined broccoli invariants.
\newblock {\em J. Algebraic Geom.}, 28(1):1--41, 2019.


\bibitem[22]{MR2667135}
M.~Gross, R.~Pandharipande, and B.~Siebert.
\newblock The tropical vertex.
\newblock {\em Duke Math. J.}, 153(2):297--362, 2010.
      


\bibitem[23]{MR1882667}
J.~Li.
\newblock Stable morphisms to singular schemes and relative stable morphisms.
\newblock {\em J. Differential Geom.}, 57(3):509--578, 2001.

\bibitem[24]{MR1938113}
J.~Li.
\newblock A degeneration formula of {GW}-invariants.
\newblock {\em J. Differential Geom.}, 60(2):199--293, 2002.

\bibitem[25]{liwu}
J.~Li and B.~Wu.
\newblock Good degeneration of Quot-schemes and coherent systems. 
\newblock {\em Comm. Anal. Geom.}, 23(4), 841-921.

\bibitem[26]{mnop1}
D.~Maulik, N.~Nekrasov, A.~Okounkov, and R.~Pandharipande. 
\newblock Gromov–Witten theory and Donaldson–Thomas theory, I. \newblock {\em Compos. Math.}, 142(5), 1263-1285, 2006.

\bibitem[27]{mnop2}
D.~Maulik, N.~Nekrasov, A.~Okounkov, and R.~Pandharipande. 
\newblock Gromov–Witten theory and Donaldson–Thomas theory, II. \newblock {\em Compos. Math.}, 142(5), 1286-1304, 2006.

\bibitem[28]{moop}
D.~Maulik, A.~Oblomkov, A.~Okounkov, and R.~Pandharipande. \newblock Gromov-Witten/Donaldson-Thomas correspondence for toric 3-folds. 
\newblock {\em Invent. Math.}, 186(2), 435-479, 2011.

\bibitem[29]{mpt}
D.~Maulik, R.~Pandharipande, and R.P.~Thomas.
\newblock Curves on {$K3$} surfaces and modular forms, with an appendix by A. Pixton
\newblock {\em J. Topol.}, 3(4), 937--996, 2010.
 

\bibitem[30]{MR2137980}
G.~Mikhalkin.
\newblock Enumerative tropical algebraic geometry in {$\mathbb{R}^2$}.
\newblock {\em J. Amer. Math. Soc.}, 18(2):313--377, 2005.

\bibitem[31]{MR717614}
D.~Mumford.
\newblock Towards an enumerative geometry of the moduli space of curves.
\newblock In {\em Arithmetic and geometry, {V}ol. {II}}, volume~36 of {\em
  Progr. Math.}, pages 271--328. Birkh\"auser Boston, Boston, MA, 1983.

\bibitem[32]{pt}
R.~Pandharipande and R.P.~Thomas.
\newblock Curve counting via stable pairs in the derived category. 
\newblock {\em Invent. Math.}, 178(2), 407-447.

\bibitem[33]{parker2017three}
B.~Parker.
\newblock Three dimensional tropical correspondence formula.
\newblock {\em Comm. Math. Phys.},
353(2): 791--819, 2017.
  

\bibitem[34]{MR1771228}
R.~Vakil.
\newblock Counting curves on rational surfaces.
\newblock {\em Manuscripta Math.}, 102(1):53--84, 2000.

\end{thebibliography}
\end{document}